\newcommand{\ave}[1]{ \left\{\!\!\left\{ {#1} \right\}\!\!\right\} }
\newcommand{\jump}[1]{ \left[ \! \left[ {#1} \right] \! \right] }
\newcommand{\triple}[1]{ |\!|\!|{#1}|\!|\!| }
\newcommand{\bs}[1]{\boldsymbol{#1}}
\title{A boundary integral equation formulation for transient \\ electromagnetic transmission problems on Lipschitz domains}
\titlerunning{BIE formulation for transient electromagnetic transmission}
\author{Tonatiuh S\'anchez-Vizuet
    \thanks{Partially funded by NSF grant NSF-DMS-2137305. }
    }
\institute{Tonatiuh S\'anchez-Vizuet
    \at Department of Mathematics, The University of Arizona, USA
        \email{tonatiuh@arizona.edu}
    }
\date{}
\begin{document}

\maketitle

\begin{abstract}

We propose a boundary integral formulation for the dynamic problem of electromagnetic scattering and transmission by homogeneous dielectric obstacles. In the spirit of Costabel and Stephan, we use the transmission conditions to reduce the number of unknown densities and to formulate a system of coupled boundary integral equations describing the scattered and transmitted waves. The system is transformed into the Laplace domain where it is proven to be stable and uniquely solvable. The Laplace domain stability estimates are then used to establish the stability and unique solvability of the original time domain problem. Finally, we show how the bounds obtained in both Laplace and time domains can be used to derive error estimates for semi discrete Galerkin discretizations in space and for fully discrete numerical schemes that use Convolution Quadrature for time discretization and a conforming Galerkin method for discretization of the space variables.   

\keywords{Electromagnetic scattering \and transient wave scattering \and time-dependent boundary integral equations \and convolution quadrature.}
\subclass{45A05 \and 78A40 \and 78M10 \and 78M15.}
\end{abstract}

% ===================================================
\section{Introduction}
% ===================================================

Consider a bounded open domain $\Omega_-\subset\mathbb R^3$ with Lipschitz boundary $\Gamma$ and define the complementary unbounded region as $\Omega_+:=\mathbb R^3\setminus\overline{\Omega_-}$. We will denote by $\bs n$ the unit vector normal to $\Gamma$ and pointing in the direction of $\Omega_+$. We will consider both $\Omega_-$ (the \textit{interior} domain or \textit{scatterer}) and $\Omega_+$ (the \textit{exterior} domain) to be filled with homogeneous materials with linear and constant electromagnetic properties encoded in their electric permittivity and magnetic permeability coefficients which will be denoted respectively by $\epsilon_+$ and $\mu_+$ in the exterior domain $\Omega_+$ and by  $\epsilon_-$ and $\mu_-$ in the interior domain $\Omega_-$. We will be concerned with the mathematical description of the situation where an electromagnetic wave propagating through $\Omega_+$ impinges upon $\Omega_-$. Due to the difference in electromagnetic properties between the two regions, part of the incident wave will be reflected back into $\Omega_-$ but part of it will get transmitted across the interface $\Gamma$. The incident electromagnetic perturbation consists an electric field ${\bf E}^{inc}$ and a magnetic field ${\bf H}^{inc}$, while---due to the assumption of linear behavior---the total field will be either the superposition of the incident fields with the scattered fields ${\bf E}^{scat}$ and ${\bf H}^{scat}$ in $\Omega_+$, or the transmitted fields  ${\bf E}$ and ${\bf H}$ inside $\Omega_-$. Hence, if the behavior of the scattered and transmitted fields is known, the total fields can be recovered by superposition as
\[
{\bf E}^{tot} = \begin{cases} {\bf E}^{scat} + {\bf E}^{inc} & \text{ in } \Omega_+ \\ {\bf E}  & \text{ in } \Omega_-  \end{cases}\,,\qquad \text{ and } \qquad 
{\bf H}^{tot} = \begin{cases} {\bf H}^{scat} + {\bf H}^{inc} & \text{ in } \Omega_+ \\ {\bf H}  & \text{ in } \Omega_- \end{cases}\, .
\]
Faraday's and Amp\`ere's laws enable the description of the electromagnetic field entirely in terms of either magnetic or electric field;we will opt for the latter option. Assuming that the incident wave has not yet impinged upon the scatterer at initial time, the interaction is then described by the system 
\begin{subequations}\label{eq:TDsystem}
\begin{alignat}{6}
\label{eq:TDsystema}
{\bf curl\, curl\, E}^{scat} + c_+^{-2}\partial_{tt}^2{\bf E} =\,& \bs 0 & \qquad \qquad & \text{ in } \Omega_+\times(0,T)\,,\\
\label{eq:TDsystemb}
{\bf curl\, curl\, E} + c_-^{-2}\partial_{tt}^2{\bf E} =\,& \bs 0 & \qquad \qquad & \text{ in } \Omega_-\times(0,T)\,,\\
\label{eq:TDsystemc}
\bs n\times \left({\bf E} - {\bf E}^{scat}\right)\times \bs n =\,& \bs n\times \left({\bf E}^{inc}\right)\times \bs n & \qquad \qquad & \text{ on } \Gamma\times(0,T)\,,\\
\label{eq:TDsystemd}
\bs n\times {\bf curl} \left(\mu_-^{-1}{\bf E} - \mu_+^{-1}{\bf E}^{scat}\right) =\,& \bs n\times {\bf curl}\left( \mu_+^{-1}{\bf E}^{inc} \right) & \qquad \qquad & \text{ on } \Gamma\times(0,T)\,,\\
\label{eq:TDsysteme}
 {\bf E}^{scat} = \partial_t{\bf E}^{scat} =\,& \bs 0 &\qquad \qquad & \text{ in } \Omega_+\times 0\,,\\
\label{eq:TDsystemf}
{\bf E} = \partial_t{\bf E} =\,& \bs 0 &\qquad \qquad & \text{ in } \Omega_-\times 0 .
\end{alignat}
\end{subequations}
where $c_+:=(\epsilon_+\mu_+)^{-1/2}$ and $c_-:=(\epsilon_-\mu_-)^{-1/2}$ denote the speed of propagation of the wave in $\Omega_+$ and $\Omega_-$ respectively. Equations \eqref{eq:TDsystema} and \eqref{eq:TDsystemb} are the electromagnetic wave equations in the interior and exterior domain, equations \eqref{eq:TDsystemc} and \eqref{eq:TDsystemd} express respectively the continuity of the tangential component of the electric and magnetic field and equations \eqref{eq:TDsysteme} and \eqref{eq:TDsystemf} express the fact that the incident wave is supported away of the interface $\Gamma$ at the initial time (i.e. the causality of the process). We note that the case when the coefficients in $\Omega_-$ are allowed to vary leads to a coupled PDE-BIE formulation analyzed by the authors in \cite{HsSaWe:2024}.

Due to the presence of an unbounded region in the model, reformulations of this problem that lead to boundary integral equations (BIEs) have been a tool of choice in computational electromagnetism for a long time. The main practical advantage of this approach is that no approximation of an unbounded domain is required in a simulation. This important property has led to the development of many different BIE formulations for the scattering problem (see \cite{HeKaRo2021} for a recent account of some of the options available). However, the pioneering formulation due to Stratton and Chu \cite{StCh1939} has withstood the test of time and remains one of the preferred options for practitioners to this date \cite{AnMaMa2019}. Stratton and Chu derived their celebrated representation under two important assumptions: that the interface between the scatterer and its surroundings is smooth, and that the time dependence of the electromagnetic radiation was time-harmonic.

Over time, the mathematical aspects surrounding BIEs for time-harmonic electromagnetic scattering by smooth obstacles developed into a mature theory (see, for instance \cite{CoKr2019,CoKr1983,Nedelec2001}) that has been successfully applied in many different contexts. However, the relaxation of the smoothness requirement was not achieved until the early 2000's when the work of Annalisa Buffa, Patrick Ciarlet Jr., Martin Costabel, Christoph Schwab and Dongwoo Sheen fully developed the theory for the time-harmonic Maxwell system in Sobolev spaces over Lipschitz domains \cite{BuCi2001A,BuCi2001B,BuCoSc2002,BuCoSh2002}. Given the success of the use of BIEs in the time-harmonic regime in many different areas of application, the development of the theory for the time domain system was somehow slower. However, the interest in the transient BIE formulation has received increased attention in the last decade, and some remarkable theoretical developments have been made by analyzing the system directly in the time domain \cite{QiSa2016}.

In this work, rather than analyzing the system directly in the time domain, we will take an alternative approach that involves the passage through the Laplace domain. The idea is to consider a Laplace-transformed version of the PDE system which is then recast in terms of BIEs---this will be done in Section \ref{sec:2}. To show the well posedness of this system we will have to consider an variational formulation of an equivalent transmission problem, which will be introduced in Section \ref{sec:3}. We will then show that said variational problem is well posed. in fact by considering test functions coming from a closed subspace of the energy space in we will simultaneously prove the well posedness of the continuous problem and of any conforming Galerkin discretization. By carefully tracking the role played by the Laplace variable in the stability constants of this system, it is then possible to extract information regarding the time-regularity of both problem data and the solutions of the problem along with explicit bounds for the growth of the solutions in time. The time domain results will be summarized in Section \ref{sec:4}. Thanks to poweful results by Sayas and Lubich, the bounds established in this fashion can then be used to state error bounds for semidiscrete numerical methods based on conforming Galerkin discretizations in space and also for full discretizations that make use of Convolution Quadrature in time. We will present the error estimates obtained in this way in the final Section \ref{sec:5}.

In the seminal article \cite{LaSa2009} Laliena and Sayas laid the groundwork for the use of these techniques in the analysis of transient wave phenomena. Since then, these ideas have been applied with success in acoustics \cite{BaRi2017,HaSa2016}, elastodynamics \cite{DoSaSa:2015,HsSaWe2022}, quantum mechanics \cite{MeRi2017}, and problems involving transient multi physics \cite{BrSaSa2018,HsSab:2020,HsSaSa2016,HsSaSaWe:2018,HsSa2021,HsSaWe2022,MeRiSa2022,SaSa:2016}. From the computational point of view, this approach also exploits the availability of frequency-domain computational codes that can easily be coupled with Convolution Quadrature \cite{Lubich1988,Lubich2004,LuSc1992} to produce time-domain results.  It is important to remark that different approaches in the Laplace domain have been employed. See for instance \cite{ChLy1996,Chudinovich97,Korikov2021,Lytova2003} and the references therein.

%=======================================
\section{A Laplace domain system of BIEs}\label{sec:2}
%=======================================
%
For the next several sections we will be exclusively working on the Laplace domain. In order to keep the notation as light as possible \textit{we will use the exact same notation for functions defined in the Laplace domain and their time-domain counterparts}. We will not deal with time domain variables until the final Section \ref{sec:5} and thus there shall be no ambiguity on whether a function is defined on either domain. \\

\noindent\textbf{Basic facts from Sobolev spaces for the Maxwell system.} Before posing the Laplace domain problem we will first introduce some essential notation and definitions from the theory of Sobolev spaces associated to the Maxwell system. We will keep the details to a minimum and refer the interested reader to \cite[Chapter 6]{HsWe2021} for a detailed account.

First of all, for an open domain $\mathcal O\subset\mathbb R^3$ with Lipschitz boundary $\Gamma:=\partial\mathcal O$, we define the space
 \[
 { \bf H} (\mathrm{{\bf curl}}, \mathcal{O}) :=  \{ \bs u \in {\bf L}^2 (\mathcal{O}) := \mathrm L^2( \mathcal{O})^3: {\bf curl}\,\bs u \in {\bf L}^2 (\mathcal{O}) \}
\]
which becomes a Hilbert space when endowed with the inner product
\[
(\bs u,\bs v)_{{\bf curl},\mathcal O} := \int_{\mathcal O} (\bs u \cdot \bs v + {\bf curl\,}\bs u \cdot {\bf curl\,}\bs v)\,, \qquad  \|{\bs u} \|^2 _{\mathrm{{\bf curl}},\; \mathcal{O}} := (\bs u,\overline {\bs u})_{{\bf curl},\mathcal O} = \| {\bs u}\|^2_{L^2(\mathcal{O})}+ \| \mathrm{{\bf curl}}\,{\bs  u} \|^2_{L^2(\mathcal{O})}.
\]
Two different notions of ``restriction to the boundary" $\Gamma$ are possible in this space. The \textit{tangential trace} $\gamma_t$
\[
 \gamma_t : { \bf H} (\mathrm{{\bf curl}}, \mathcal{O}) \rightarrow {\bf H}_{\|}^{-1/2}(\mathrm{div}_{\Gamma} , \Gamma) 
 \qquad \qquad \gamma_t: \bs u \mapsto \boldsymbol n \times \gamma\mathbf u\,,
\]
and the \textit{tangential projection} $\pi_t$ 
\[
\pi_t : 
  { \bf H} (\mathrm{ {\bf curl}}, \mathcal{O}) \rightarrow {\bf H}_{\perp}^{-1/2}(\mathrm{curl}_{\Gamma}, \Gamma) \qquad \qquad \pi_t : \bs u \mapsto \boldsymbol n \times (\gamma{\bs u} \times \boldsymbol n) = \gamma\bs u - (\gamma\bs u\cdot \boldsymbol n)\boldsymbol n.
\]
For the purpose of this work, the only things that we need to know about the trace spaces ${\bf H}_{\|}^{-1/2}(\mathrm{div}_{\Gamma} , \Gamma) $ and ${\bf H}_{\perp}^{-1/2}(\mathrm{curl}_{\Gamma}, \Gamma)$ are that they are both normed spaces equipped respectively with the graph norms
\[
\|\cdot\|_{\|}:= \|\cdot\|_{-1/2,{\rm div}_\Gamma,\Gamma} \qquad \text{ and } \qquad \|\cdot\|_{\perp}:= \|\cdot\|_{-1/2,{\rm curl}_\Gamma,\Gamma}\,,
\]
and that the former can be identified with the dual of the latter. We will denote the duality pairing of these spaces by 
\[
\langle\gamma\bs u,\pi_t\bs v\rangle_\Gamma = \int_\Gamma \gamma_t\bs u\cdot\pi_t\bs v\,.
\]
Both trace operators are continuous and surjective in their respective ranges, so that pseudo inverses $\gamma_t^\dagger$ and $\pi_t^\dagger$ from the trace spaces back into $ { \bf H} (\mathrm{{\bf curl}}, \mathcal{O})$ are well defined. We will commonly refer to the pseudo inverse mappings as \text{liftings}. In the particularly relevant case of $\mathcal O := \Omega_+ \cup \Omega_- = \mathbb R^3\setminus\Gamma$ we will use the superscripts $``+"$ and $``-"$ on the trace operators to explicitly indicate the domain from which the trace is being defined. In this case, the integration by parts formulas
\begin{equation}
\label{eq:ibpInt}
\langle\gamma_t^\pm{\bs u},\pi_t^\pm{\bs v}\rangle_\Gamma = \pm({\bs u},{\bf curl\, }\bs v)_{\Omega_\pm} \mp ({\bf curl\,}\bs u,{\bs v})_{\Omega_\pm}.
\end{equation} 
which hold for all ${\bs u},\,{\bs v}\in \mathbf H({\bf curl}, \Omega_-) $ and  ${\bs u},\,{\bs v}\in \mathbf H({\bf curl}, \Omega_+) $ will be of fundamental importance.\\

% ==========================================
\noindent\textbf{Laplace domain PDE system.}  Considering the solutions of the system \eqref{eq:TDsystem} as elements of the Sobolev space
\[
\left\{\bs u \in {\bf H}({\bf curl}, \mathbb R^3\setminus\Gamma): {\bf curl\,curl\,}\bs u \in {\bf L}^2(\mathbb R^3\setminus\Gamma)\right\}\times\left\{\bs v \in {\bf H}({\bf curl}, \mathbb R^3\setminus\Gamma): {\bf curl\,curl\,}\bs v \in {\bf L}^2(\mathbb R^3\setminus\Gamma)\right\},
\]
and keeping in mind the causality of the scattered and transmitted waves \eqref{eq:TDsysteme} and \eqref{eq:TDsystemf}, it is possible to rigorously justify transforming the system into the Laplace domain \cite{Sayas2016}, which then becomes:
\begin{subequations}
\label{eq:LDsystem}
\begin{alignat}{6}
\label{eq:LDsystema}
{\bf curl}\,{\bf curl}\, {\bf E}^{scat} + (s/c_+)^2\, {\bf E}^{scat}  &=  {\bf 0}  & \quad & \text{ in } \Omega_+\,,\\
\label{eq:LDsystemb} 
{\bf curl}\,{\bf curl}\, {\bf E} + (s/c_-)^2\, {\bf E}  &= {\bf 0}  &\quad& \text{ in } \Omega_-\,,\\
\label{eq:LDsystemc}
\pi_t^-{\bf E} - \pi_t^+{\bf E}^{scat}  & = \pi_t^+{\bf E}^{inc} & \quad & \text{ on } \Gamma, \\
\label{eq:LDsystemd}
\mu^{-1}_-\gamma_t^-{ \bf curl}\,{\bf E} - \mu_+^{-1}\gamma_t^+{\bf curl\,E }^{scat} &   = \mu_+^{-1}\gamma_t^+{\bf curl } \, {\bf E}^{inc} & \quad & \text{ on } \Gamma,
\end{alignat}
\end{subequations}
 where the Laplace parameter $s$ is a complex number with positive real part. We will now reformulate the system above in terms of boundary integral equations and proceed to establish its well posedness and stability properties.\\
 
% ============================================================================
\noindent\textbf{Electromagnetic boundary integral operators and potentials.} As we recast the system \eqref{eq:LDsystem} in terms of boundary integral equations we will freely make use of notation and tools from potential theory. In the interest of brevity we will not devote too much time in the details and instead will only introduce the required notation; the interested reader is directed to \cite[Chapter 6]{HsWe2021} for the necessary background material in full detail. 

We will denote the fundamental solution of the Yukawa potential equation by
\[
G(x,y;s) := \frac{e^{-s|x-y|}}{4\pi|x-y|},
\]
and will use it to define the following electromagnetic layer potentials
\begin{alignat*}{6}
\nonumber
\mathcal D(s){\bf j} :=\,&{\bf curl}\!\! \int_\Gamma \!\!G\left(x,y;s\right){\bf j}(y)dy\,, &\qquad&& \mathcal S(s){\bf j}:=\,& {\bf curl\,curl}\!\!\int_\Gamma\!\! G\left(x,y;s\right){\bf j}(y)dy,\\
\widetilde{\mathcal D}(s)\mathbf m :=\,& \mathcal D(s)\circ\gamma_t\circ\pi_t^\dagger\,\mathbf m\,, &\qquad \qquad \qquad && \widetilde{\mathcal S}(s)\mathbf m :=\,& \mathcal S(s)\circ\gamma_t\circ\pi_t^\dagger\mathbf m\,,
\end{alignat*} 
where $\mathbf m\in {\bf H}^{-1/2}_{\perp} ( {\rm curl}_{\Gamma}, \Gamma)$ and ${\bf j} \in {\bf H}^{-1/2}_{||} ({\rm div}_{\Gamma}, \Gamma )$, are referred to as \textit{density functions}. We will also make use of the following boundary integral operators
\begin{alignat*}{6}
\nonumber
\mathcal K(s) \mathbf j :=\,& \gamma_t{\bf curl} \int_\Gamma G(x,y;s){\bf j}(y)dy, & \qquad \qquad && \mathcal V(s) {\bf j} :=\,& \pi_t{\bf curl\,curl}\int_\Gamma G(x,y;s){\bf j}(y)dy, \\
 \widetilde{\mathcal K}(s) \mathbf m :=&\, \mathcal K(s)\circ\gamma_t\circ\pi_t^\dagger{\bf m},  & \qquad && \widetilde{\mathcal V}(s){\bf m}:=\,&\mathcal V(s)\circ\gamma_t\circ\pi_t^\dagger{\bf m}.
\end{alignat*}
The traces of the layer potentials above satisfy the identities
\begin{equation}
\label{eq:LPtraces}
\gamma_t^{\pm}\mathcal D(s) = \pm\frac{1}{2} + \mathcal K(s)
\quad \text{ and } \quad  
 \pi_t^{\pm}\widetilde{\mathcal D}(s) = \pm\frac{1}{2} + \widetilde{\mathcal K}(s),
\end{equation}
and have the following continuity/discontinuity properties across the interface $\Gamma$
\begin{equation}
\label{eq:JumpAndAve}
\begin{array}{llll}
\quad \ave{\gamma_t\mathcal D(s){\bf j}} = \mathcal K(s){\bf j}\,, & \quad \jump{\gamma_t\mathcal D(s){\bf j}\,} = -{\bf j}\,, & \quad \ave{\pi_t\widetilde{\mathcal D}(s){\bf m}} = \widetilde{\mathcal K}(s){\bf m}\,, & \quad \jump{\pi_t\widetilde{\mathcal D}(s){\bf m}} = -{\bf m}\,,\\[2ex]
\quad \ave{\pi_t \mathcal S(s){\bf j}} = \mathcal V(s){\bf j}\,, & \quad \jump{\pi_t \mathcal S(s){\bf j}\,}  = {\bf 0}\,,& \quad \ave{\gamma_t\widetilde{\mathcal S}(s){\bf m}} = \widetilde{\mathcal V}(s){\bf m}\,, & \quad \jump{\gamma_t\widetilde{\mathcal S}(s){\bf m}} = {\bf 0}\,.
\end{array}
\end{equation} 
Above, as is customary, we denoted the jump and average of the traces of a function $\bs u$ across $\Gamma$ by
\[
\jump{\gamma_t\bs u} : = \gamma_t^-\bs u - \gamma_t^+ \bs u \qquad \text{ and } \qquad \ave{\gamma_t\bs u} : = \frac{1}{2}\left( \gamma_t^-\bs u + \gamma_t^+ \bs u\right),
\]
and similarly for the tangential projection $\pi_t$.\\

% =================================
\noindent\textbf{Laplace domain BIE.} We now propose solutions of the system \eqref{eq:LDsystem}, ${\bf E}^{scat}$ and ${\bf E}$, in terms of boundary layer potentials of the form
\[
{\bf E}^{scat} := \widetilde{\mathcal D}(s/c_+)\mathbf m^{scat} - \frac{1}{s\epsilon_+}\mathcal S(s/c_+)\mathbf j^{scat} \qquad \text{ and } \qquad {\bf E} :=  \frac{1}{s\epsilon_-}\mathcal S(s/c_-)\mathbf j - \widetilde{\mathcal D}(s/c_-)\mathbf m\,,
\]
for unknown densities ${\bf m}^{scat}\,,{\bf m}\in  {\bf H}^{-1/2}_{\perp} ( {\rm curl}_{\Gamma}, \Gamma)$ and ${\bf j}^{scat},\,{\bf j} \in {\bf H}^{-1/2}_{||} ({\rm div}_{\Gamma}, \Gamma )$. From their definition in terms of layer potentials, it is possible to show (see for instance \cite[Section 4.3]{HsSaWe:2024}) that these functions indeed satisfy equations \eqref{eq:LDsystema} and \eqref{eq:LDsystemb} respectively, and that their curls can be expressed as
\[
{\bf curl\, E}^{scat} = \widetilde{\mathcal S}(s/c_+)\mathbf m^{scat} + s\mu_+\mathcal D(s/c_+)\mathbf j^{scat} \qquad \text{ and } \qquad {\bf curl\, E} :=  -\widetilde{\mathcal S}(s/c_-)\mathbf m - s\mu_-\mathcal D(s/c_-)\mathbf j\,.
\]
Given these integral representations and the observation that, since $supp\,{\bf E}^{scat}\subset \Omega_+$ and $supp\,{\bf E}\subset \Omega_-$, $\pi_t^-{\bf E}^{scat} = \gamma_t^-{\bf curl\,E}^{scat}=\pi_t^+{\bf E} = \gamma_t^+{\bf curl\,E}=\bs 0$, the properties \eqref{eq:JumpAndAve} imply that
\[
\begin{array}{lll}
{\bf m}^{scat} = -\jump{\pi_t{\bf E}^{scat}} = \pi_t^+{\bf E}^{scat}\,, & \phantom{+++}& s\mu_+{\bf j}^{scat} = -\jump{\gamma_t{\bf curl\, E}^{scat}} = \gamma_t^+{\bf curl\, E}^{scat}\,,\\[.8ex]
{\bf m} = \jump{\pi_t{\bf E}^{\phantom{t}}} = \pi_t^-{\bf E}\,, &\phantom{+++} & s\mu_-{\bf j} = \jump{\gamma_t{\bf curl\, E}^{\phantom{t}}} = \gamma_t^-{\bf curl\, E}\,.
\end{array}
\]
Combining the identities above with the transmission conditions \eqref{eq:LDsystemc} and \eqref{eq:LDsystemd} we can eliminate one pair of unknown densities by observing that
\[
{\bf m}^{scat} = {\bf m} - \bs\phi \qquad \text{ and } \qquad {\bf j}^{scat} = {\bf j} -(s\mu_+)^{-1}\bs\lambda,
\]
where, to simplify the notation, we have defined
\[
\bs \phi :=\pi_t^+{\bf E}^{inc} \qquad \text{ and } \qquad \bs \lambda:= \gamma_t^+ {\bf curl\, E}^{inc}.
\]
This leads to the representations
\begin{subequations}
\label{eq:IntegralRepresentations}
\begin{alignat}{8}
\label{eq:IntegralRepresentationsA}
{\bf E}^{scat} =\,& \widetilde{\mathcal D}(s/c_+)\left({\bf m} - \bs\phi\right) -(s\epsilon_+)^{-1}\mathcal S(s/c_+)\left({\bf j} -(s\mu_+)^{-1}\bs\lambda\right)\,, \\
\label{eq:IntegralRepresentationsB}
{\bf curl\, E}^{scat} =\,& \widetilde{\mathcal S}(s/c_+)\left({\bf m} - \bs\phi\right) + s\mu_+\mathcal D(s/c_+)\left({\bf j} -(s\mu_+)^{-1}\bs\lambda\right)\,,\\
\label{eq:IntegralRepresentationsC}
{\bf E} =\,&  (s\epsilon_-)^{-1}\mathcal S(s/c_-)\mathbf j - \widetilde{\mathcal D}(s/c_-)\mathbf m\,,\\
\label{eq:IntegralRepresentationsD}
{\bf curl\, E} =\,&  -\mathcal S(s/c_-)\mathbf m - s\mu_-\mathcal D(s/c_-)\mathbf j\,.
\end{alignat}
\end{subequations}
From the integral representations above, we can use the properties \eqref{eq:LPtraces} and \eqref{eq:JumpAndAve} to recast the transmission conditions \eqref{eq:LDsystemc} and \eqref{eq:LDsystemd} in terms of boundary integral operators, arriving at the following system of BIEs in the style of that of Costabel and Stephan \cite{CoSt1985}
\[
\mathbb L(s)({\bf j},{\bf m})^\top = \mathbb R(s) (\bs \lambda,\bs \phi)^\top\,,
\]
 where we have defined the matrices of operators
\begin{subequations}
\label{eq:Matrices}
\begin{align}
\label{eq:MatricesA}
\mathbb L(s) :=\,&
\begin{bmatrix}
(s\epsilon_+)^{-1}\mathcal V(s/c_+) - (s\epsilon_-)^{-1}\mathcal V(s/c_-) &\phantom{++} & \widetilde{\mathcal K}(s/c_+) + \widetilde{\mathcal K}(s/c_-)  \\[1ex]
s\left(\mathcal K(s/c_+) + \mathcal K(s/c_-) \right) & \phantom{++} & \mu_+^{-1}\widetilde{\mathcal V}(s/c_+) + \mu_-^{-1}\widetilde{\mathcal V}(s/c_-)
\end{bmatrix}\,,\\[2ex]
\label{eq:MatricesB}
\mathbb R(s) :=\,&
\begin{bmatrix}
(c_+/s)^2\mathcal V(s/c_+) & \phantom{++} & -\frac{1}{2} + \widetilde{\mathcal K}(s/c_+)\\[1ex]
\mu_+^{-1}\left(-\frac{1}{2}+ \mathcal K(s/c_+)\right) & \phantom{++} & \mu_+^{-1}\widetilde{\mathcal V}(s/c_+)
\end{bmatrix}\,.
\end{align}
\end{subequations}
We will now study the well-posedness of this system via a conforming Galerkin discretization. To that end, we will consider the following \textit{closed subspaces} of the solution spaces
\[
{\bf X}_h \subset {\bf H}_{\|}^{-1/2}({\rm div}_\Gamma,\Gamma)  \qquad \text{ and } \qquad {\bf Y}_h \subset {\bf H}_\perp^{-1/2}({\rm curl}_\Gamma,\Gamma)\,,
\]
as well as their polar sets, defined respectively as
\begin{align*}
{\bf X}_h^\circ :=\,& \left\{\bs\varphi\in  {\bf H}_\perp^{-1/2}({\rm curl}_\Gamma,\Gamma): \langle\bs \eta ,\bs\varphi\rangle_\Gamma = 0 \;\;\forall\bs\eta\in {\bf X}_h \right\} \,,\\
{\bf Y}_h^\circ :=\,& \left\{\bs\eta\in  {\bf H}_{\|}^{-1/2}({\rm div}_\Gamma,\Gamma) : \langle\bs \eta ,\bs\varphi\rangle_\Gamma = 0 \;\;\forall\bs\varphi\in {\bf Y}_h \right\}\,.
\end{align*}
The weak formulation that we will focus on can then be stated as follows:
\begin{equation}
\label{eq:WeakBIE}
\left.\begin{array}{l}
\text{ Given  data }\,(\bs \lambda,\bs\phi)\in {\bf H}_{\|}^{-1/2}({\rm div}_\Gamma,\Gamma) \times {\bf H}_\perp^{-1/2}({\rm curl}_\Gamma,\Gamma)\,,\, \\[1.2ex]
\text{ find }\, ({\bf j}_h, {\bf m}_h) \in {\bf X}_h\times{\bf Y}_h \; \text{ such that: } \phantom{++}\\[1.2ex]
\langle \mathbb L(s)({\bf j}_h,{\bf m}_h)^\top, (\bs\eta,\bs\varphi)\rangle_\Gamma = \langle\mathbb R(s) (\bs \lambda,\bs \phi)^\top, (\bs\eta,\bs\varphi)\rangle_\Gamma \;\; \forall (\bs\eta,\bs\varphi)\in  {\bf X}_h\times{\bf Y}_h\,, \phantom{++}
\end{array} \right\}
\end{equation}
where the matrices of operators $\mathbb L(s)$ and $\mathbb R(s)$ have been defined in \eqref{eq:Matrices}. In the following sections we will prove the well-posedness of the problem above and will transfer this result into time domain estimates for the solutions of the time-domain problem \eqref{eq:TDsystem}.
%
%
% ================================
\section{Well posedness in the Laplace domain}\label{sec:3}
% ==================================

\noindent\textbf{An equivalent transmission problem.} We could prove the unique solvability of the weak formulation \eqref{eq:WeakBIE} by studying the coercivity of the Laplace-domain Calder\'on operator in a way similar to the analysis performed in \cite{Nick2022} for the scattering off screens. Instead, we will prefer the technique of Laliena and Sayas \cite{LaSa2009} that requires the study of a non-standard transmission problem that underscores the nature of functions defined in terms of layer potentials as fundamentally defined over the entire space. This idea can be traced back to N\'edelec and Planchard \cite{NePl1973}, who used it in the
 context of first-kind integral equations. When compared to \cite{Nick2022}, the current manuscript presents an alternate way of analyzing the boundary integral equations for electromagnetic scattering in the time domain. In particular, the use of an equivalent non-standard transmission problem to analyze the well-posedness of the problem, leverages tools common in the field of linear elliptic PDE's. This makes the arguments accesible to the broad community of numerical analysts who are typically well versed in these types of arguments.

Indeed, if the \textit{exact} solutions to \eqref{eq:WeakBIE} are known and used in the representations \eqref{eq:IntegralRepresentations} to define scattered and transmitted fields, the resulting functions will be supported in either $\Omega_-$ or $\Omega_+$. However, if the Galerkin approximations $({\bf j}_h,{\bf m}_h)$ are used instead, the resulting fields need not be supported entirely on one side of $\Gamma$. This observation leads to the following proposition.

\begin{proposition}
\label{pr:1} 
Let $({\bf j}_h,{\bf m}_h)$ be a solution pair to \eqref{eq:WeakBIE}. Then ${\bf E}_h^{scat}$ and ${\bf E}_h$ are defined using these densities through the integral representations \eqref{eq:IntegralRepresentations} if and only if they satisfy the following transmission problem
\begin{subequations}
\label{eq:EquivalentTP}
\begin{align}
\label{eq:EquivalentTPA}
{\bf curl\,curl\, E}_h^{scat} + (s/c_+)^2{\bf E}_h^{scat} =\,& \bs 0  \qquad \qquad \qquad \text{ in } \mathbb R^3\setminus\Gamma\,, \\
\label{eq:EquivalentTPB}
{\bf curl\,curl\, E}_h + (s/c_-)^2{\bf E}_h =\,& \bs 0  \qquad \qquad  \qquad \text{ in } \mathbb R^3\setminus\Gamma\,, \\
\label{eq:EquivalentTPC}
\jump{\pi_t{\bf E}_h^{scat}} + \jump{\pi_t{\bf E}_h^{\phantom{t}}} =\,& \bs\phi \,,\\
\label{eq:EquivalentTPD}
\mu_+^{-1}\jump{\gamma_t{\bf curl\,E}_h^{scat}} + \mu_-^{-1}\jump{\gamma_t{\bf curl\,E}_h^{\phantom{t}}} =\,& \mu_+^{-1}\bs\lambda \,,\\
\label{eq:EquivalentTPE}
\left(\jump{\gamma_t{\bf curl\,E}_h^{\phantom{t}}}, \jump{\pi_t{\bf E}_h^{scat}}\right) \in\,& {\bf X}_h\times{\bf Y}_h\,, \\
\label{eq:EquivalentTPF}
\left(\pi_t^+{\bf E}_h - \pi_t^-{\bf E}_h^{scat}\,,\,\, \mu_-^{-1}\gamma_t^+{\bf curl\,E}_h - \mu_+^{-1}\gamma_t^-{\bf curl\,E}_h^{scat} \right) \in\,& {\bf X}_h^\circ \times {\bf Y}_h^\circ\,.
\end{align}
\end{subequations} 

\end{proposition}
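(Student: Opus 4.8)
The plan is to prove the equivalence as two implications, keeping the hypothesis that $(\mathbf j_h,\mathbf m_h)$ solves \eqref{eq:WeakBIE} as a standing assumption. The organizing principle, going back to N\'edelec and Planchard \cite{NePl1973} and to Laliena and Sayas \cite{LaSa2009}, is that conditions \eqref{eq:EquivalentTPA}--\eqref{eq:EquivalentTPE} only record the mapping properties of the layer potentials read through the jump relations \eqref{eq:JumpAndAve}, while the polar-set condition \eqref{eq:EquivalentTPF} is a disguised form of the Galerkin identity \eqref{eq:WeakBIE}. For the forward implication I would assume that $\mathbf E_h^{scat}$ and $\mathbf E_h$ are given by \eqref{eq:IntegralRepresentations}. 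Equations \eqref{eq:EquivalentTPA}--\eqref{eq:EquivalentTPB} are then immediate, since each of the potentials in \eqref{eq:IntegralRepresentations} solves the homogeneous resolvent equation in $\mathbb R^3\setminus\Gamma$, exactly as recalled in Section~\ref{sec:2}. Computing the four relevant jumps directly from \eqref{eq:IntegralRepresentations} with \eqref{eq:JumpAndAve}---the single-layer terms contributing nothing and the double-layer terms returning their densities---gives $\jump{\pi_t\mathbf E_h}=\mathbf m_h$, $\jump{\gamma_t{\bf curl\,}\mathbf E_h}=s\mu_-\mathbf j_h$, $\jump{\pi_t\mathbf E_h^{scat}}=\bs\phi-\mathbf m_h$ and $\mu_+^{-1}\jump{\gamma_t{\bf curl\,}\mathbf E_h^{scat}}=\mu_+^{-1}\bs\lambda-s\mathbf j_h$. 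Adding the scattered and transmitted contributions produces \eqref{eq:EquivalentTPC}--\eqref{eq:EquivalentTPD}, while the membership $\mathbf j_h\in\mathbf X_h$ and $\mathbf m_h\in\mathbf Y_h$ read through these jump identities gives \eqref{eq:EquivalentTPE}.

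The heart of this direction is \eqref{eq:EquivalentTPF}. Here I would rewrite the two ``wrong-side'' traces $\pi_t^+\mathbf E_h-\pi_t^-\mathbf E_h^{scat}$ and $\mu_-^{-1}\gamma_t^+{\bf curl\,}\mathbf E_h-\mu_+^{-1}\gamma_t^-{\bf curl\,}\mathbf E_h^{scat}$ by means of the trace identities \eqref{eq:LPtraces} and the averages in \eqref{eq:JumpAndAve}. After the $\pm\tfrac{1}{2}$ jump contributions of the interior double-layer traces cancel, these two combinations are seen to coincide, up to an overall sign, with the two rows of the residual $\mathbb L(s)(\mathbf j_h,\mathbf m_h)^\top-\mathbb R(s)(\bs\lambda,\bs\phi)^\top$ of the system \eqref{eq:Matrices}. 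Since $\mathbf X_h^\circ$ and $\mathbf Y_h^\circ$ are defined through the pairing $\langle\cdot,\cdot\rangle_\Gamma$, membership of these residual rows in $\mathbf X_h^\circ\times\mathbf Y_h^\circ$ is exactly the statement that \eqref{eq:WeakBIE} holds against every test pair in $\mathbf X_h\times\mathbf Y_h$; because $(\mathbf j_h,\mathbf m_h)$ solves \eqref{eq:WeakBIE} by hypothesis, \eqref{eq:EquivalentTPF} follows.

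For the converse I would assume that $\mathbf E_h^{scat}$ and $\mathbf E_h$ satisfy \eqref{eq:EquivalentTP} and recover the representations \eqref{eq:IntegralRepresentations}. The equations \eqref{eq:EquivalentTPA}--\eqref{eq:EquivalentTPB}, together with the decay at infinity built into the energy space, allow me to invoke the Stratton--Chu representation for the Maxwell resolvent operator (see \cite{HsWe2021,HsSaWe:2024}): any such field equals the sum of a single- and a double-layer potential generated by its own jumps $\jump{\gamma_t{\bf curl}(\cdot)}$ and $\jump{\pi_t(\cdot)}$. Reading these jumps off \eqref{eq:EquivalentTPC}--\eqref{eq:EquivalentTPE} and setting $\mathbf j_h:=(s\mu_-)^{-1}\jump{\gamma_t{\bf curl\,}\mathbf E_h}\in\mathbf X_h$ and $\mathbf m_h:=\jump{\pi_t\mathbf E_h}\in\mathbf Y_h$ turns the representation formula into \eqref{eq:IntegralRepresentations}, with the scattered densities fixed as $\mathbf m_h-\bs\phi$ and $\mathbf j_h-(s\mu_+)^{-1}\bs\lambda$ through \eqref{eq:EquivalentTPC}--\eqref{eq:EquivalentTPD}. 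Running the computation of the previous paragraph backwards, \eqref{eq:EquivalentTPF} becomes \eqref{eq:WeakBIE} once more, which closes the equivalence.

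I expect the main obstacle to be the bookkeeping underlying \eqref{eq:EquivalentTPF}: one must verify that the half-identity jump terms coming from \eqref{eq:LPtraces} cancel exactly and that the surviving operator combinations reproduce the entries of $\mathbb L(s)$ and $\mathbb R(s)$ with the correct signs and the correct powers of $s\epsilon_\pm$, $\mu_\pm$ and $c_\pm$, so that the duality pairing against $\mathbf X_h$ and $\mathbf Y_h$ returns \eqref{eq:WeakBIE} row by row. A secondary technical point is the precise form of the Stratton--Chu identity invoked in the converse; I would either quote it from \cite{HsWe2021} or re-derive it from \eqref{eq:JumpAndAve}, taking care that the decay hypothesis excludes any entire solution of the resolvent equation so that the representation, and hence the identification of the densities, is unique.
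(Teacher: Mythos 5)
Your proof is correct and follows essentially the same route as the paper's: in the forward direction the jump relations \eqref{eq:JumpAndAve} yield \eqref{eq:EquivalentTPA}--\eqref{eq:EquivalentTPE} and identify \eqref{eq:EquivalentTPF} with the Galerkin residual of \eqref{eq:WeakBIE} paired against ${\bf X}_h\times{\bf Y}_h$, while in the converse the densities are defined from the jumps, \eqref{eq:EquivalentTPA}--\eqref{eq:EquivalentTPB} supply the layer-potential representation, and \eqref{eq:EquivalentTPF} translates back into \eqref{eq:WeakBIE}. The only differences are cosmetic: you make explicit the Stratton--Chu representation step that the paper's proof leaves implicit (``\eqref{eq:EquivalentTPA} and \eqref{eq:EquivalentTPB} imply that the integral representations hold''), and your sign $\mathbf j_h=+(s\mu_-)^{-1}\jump{\gamma_t{\bf curl\,}\mathbf E_h}$ in the converse is the one consistent with the forward identity $\jump{\gamma_t{\bf curl\,}\mathbf E_h}=s\mu_-\mathbf j_h$, whereas the paper's converse carries a stray minus sign there.
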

\begin{proof}
If ${\bf j}_h$ and ${\bf m}_h$ satisfy \eqref{eq:WeakBIE} and are used to define ${\bf E}_h^{scat}$ and ${\bf E}_h$ through the integral representations \eqref{eq:IntegralRepresentations}, then clearly \eqref{eq:EquivalentTPA}
 and \eqref{eq:EquivalentTPB} are satisfied. Moreover, the jumps of the traces of ${\bf E}_h$ are given by
\[
\jump{\gamma_t{\bf curl\, E}_h^{\phantom{t}}} = s\mu_-{\bf j}_h \in {\bf X}_h \qquad \text{ and } \qquad \jump{\pi_t{\bf E}_h^{\phantom{t}}} = {\bf m}_h \in {\bf Y}_h \,,
\]
and therefore \eqref{eq:EquivalentTPE} holds. From these equalities it also follows that for ${\bf E}_h^{scat}$ we have
\[
\jump{\pi_t{\bf E}_h^{scat}} = \bs\phi - {\bf m}_h = \bs\phi - \jump{\pi_t{\bf E}_h^{\phantom{t}}} \qquad \text{ and } \qquad \jump{\gamma_t{\bf curl\, E}_h^{scat}} = \bs\lambda - s\mu_+{\bf j}_h = \bs\lambda - (\mu_+/\mu_-)\jump{\gamma_t{\bf curl\, E}_h^{\phantom{t}}}\,
\]
which imply \eqref{eq:EquivalentTPC} and \eqref{eq:EquivalentTPD}. Finally, using the equalities above to write ${\bf j}_h$ and ${\bf m}_h$ in terms of the jumps, it is easy to see that the weak equation in \eqref{eq:WeakBIE} is equivalent to \eqref{eq:EquivalentTPF}.

Conversely, if $({\bf E}_h^{scat}, {\bf E}_h)$ satisfy the transmission problem \eqref{eq:EquivalentTP}, we define
\[
{\bf j}_h := -(s\mu_-)^{-1}\jump{\gamma_t{\bf curl\, E}_h^{\phantom{t}}} \in {\bf X}_h \qquad \text{ and } \qquad {\bf m}_h := \jump{\pi_t{\bf E}_h^{\phantom{t}}} \in {\bf Y}_h  \,.
\]
We then use \eqref{eq:EquivalentTPC} and \eqref{eq:EquivalentTPD} to express $\jump{\pi_t{\bf E}_h^{scat}}$ and $\jump{\pi_t{\bf E}_h^{scat}}$ in terms of ${\bf m}_h$, ${\bf j}_h$, and the problem data $\bs \lambda$ and $\bs \phi$. With these definitions, equations \eqref{eq:EquivalentTPA} and \eqref{eq:EquivalentTPB} imply that that the integral representations \eqref{eq:IntegralRepresentations} hold. Hence, starting from the integral representations and writing \eqref{eq:EquivalentTPF} in terms of boundary integral operators with the aid of \eqref{eq:LPtraces} and \eqref{eq:JumpAndAve}, we arrive at the weak BIE in \eqref{eq:WeakBIE}. $\Box$
\end{proof}  
 
% =========================================
\noindent\textbf{Variational formulation.} As a consequence of the preceding proposition, we will infer the unique solvability of the system of BIEs \eqref{eq:WeakBIE} from that of the transmission problem. We will do so by studying the variational form of \eqref{eq:EquivalentTP} and will start with some definitions.

We start by introducing an energy norm for functions in ${\bf H}({\bf curl},\mathbb R^3\setminus\Gamma)$
\[
\triple{\bs u}_{s}^2 := \| {\bf curl\,} \bs u\|^2 + |s|^2\|\bs u\|^2 = \left({\bf curl\,}\bs u,{\bf curl\,}\overline{\bs u}\right)_{\mathbb R^3} + |s|^2 \left(\bs u,\overline{\bs u}\right)_{\mathbb R^3}\,,
\]
where the bar on top of the second argument of the inner products denotes complex conjugation, and the norm in ${\bf L}^2(\mathbb R^3\setminus\Gamma)$ was denoted simply by $\|\cdot\|$. Clearly, $\triple{\bs u}_{1}$ coincides with the usual norm in the space ${\bf H}({\bf curl},\mathbb R^3\setminus\Gamma)$. Moreover, for a complex number $s$ with positive real part $\sigma := {\rm Re}\, s > 0$, the following equivalence relations hold
\begin{equation}
\label{eq:NormEquiv}
\underline{\sigma} \triple{\bs u}_{1} \leq \triple{\bs u}_{s} \leq \frac{|s|}{\underline{\sigma}}\triple{\bs u}_1,
\end{equation}
where we have denoted $\underline{\sigma}:=\min\{1,\sigma\}$ and used the easily verifiable fact that if $\sigma = {\rm Re}\,s >0$ then $\max\{1,|s|\}\underline{\sigma}\leq |s|$. We now define the bilinear form ${\rm B}\left(\cdot,\cdot\right): \left({\bf H}({\bf curl},\mathbb R^3\setminus\Gamma)\right)^2 \rightarrow \mathbb C$ as
\[
{\rm B}\left((\bs u,\bs v), (\widetilde{\bs u},\widetilde{\bs v})\right): = \mu_+^{-1}\Big(\left({\bf curl\,}\bs u,{\bf curl\,}\widetilde{\bs u}\right)_{\mathbb R^3} +  (s/c_+)^2\left(\bs u,\widetilde{\bs u}\right)_{\mathbb R^3}\Big) + \mu_-^{-1}\Big(\left({\bf curl\,}\bs v,{\bf curl\,}\widetilde{\bs v}\right)_{\mathbb R^3} +  (s/c_-)^2\left(\bs v,\widetilde{\bs v}\right)_{\mathbb R^3}\Big)\,
\]
and remark that simple algebraic computations lead to
\begin{subequations}
\label{eq:NormProperties}
\begin{align}
\label{eq:NormPropertiesA}
\left|{\rm B}\left((\bs u,\bs v), (\widetilde{\bs u},\widetilde{\bs v})\right)\right| \lesssim\,& \triple{(\bs u, \bs v)}_s \triple{(\widetilde{\bs u}, \widetilde{\bs v})}_s\, \\
\label{eq:NormPropertiesB}
\sigma\triple{(\bs u, \bs v)}_s^2\lesssim \,& {\rm Re}\left(\,\overline s\,{\rm B}\left((\bs u,\bs v), (\overline{\bs u},\overline{\bs v})\right) \right)\,,  
\end{align}
\end{subequations}
where the symbol $\lesssim$ hides constants that may depend only on the geometry of $\Gamma$ and the physical parameters but not on $s$, and we have denoted $\triple{(\bs u, \bs v)}_s^2 := \triple{\bs u}_s^2 + \triple{\bs v}_s^2$.

The discrete space for the variational formulation will be
\[
\mathbb H^h :=\left\{(\bs u,\bs v)\in\left({\bf H}({\bf curl}^2,\mathbb R^3\setminus\Gamma)\right)^2 :  \jump{\gamma_t\bs v}\in {\bf X}_h \;\; \text{ and } \;\;\jump{\pi_t\bs u}\in {\bf Y}_h \right\}\,,
\]
where
\[
{\bf H}({\bf curl}^2,\mathbb R^3\setminus\Gamma) :=\left\{\bs u \in {\bf H}({\bf curl},\mathbb R^3\setminus\Gamma) : {\bf curl\,curl\,}\bs u\in {\bf L}^2(\mathbb R^3\setminus\Gamma) \right\}\,.
\]
Finally, we need to define a generalized trace operator that will account for the essential boundary conditions stemming from \eqref{eq:EquivalentTPC} and the first component of \eqref{eq:EquivalentTPF}
\begin{alignat*}{6}
\pi :\;&& \left({\bf H}({\bf curl},\mathbb R^3\setminus\Gamma)\right)^2 & \;\; \longrightarrow \;\; && {\bf X}_h^\prime\times{\bf H}_\perp^{-1/2}({\bf curl}_\Gamma,\Gamma) \\
\nonumber
 && (\bs u,\bs v) & \;\; \longmapsto \;\; && \left({\bf P}_{{\bf X}_h}^\top\left(\pi_t^+\bs v - \pi_t^-\bs u\right), \jump{\pi_t\bs u} + \jump{\pi_t\bs v}\right)\,,
\end{alignat*}
Where the operator ${\bf P}_{{\bf X}_h}^\top:  {\bf H}_\perp^{-1/2}({\rm curl}_\Gamma,\Gamma) \to {\bf X}_h^\prime$ restricts the action of its argument to elements of ${\bf X}_h$. Note that, owing to its definition in terms of the trace projection operator $\pi_t$, the generalized trace $\pi$ has a well defined pseudo inverse $\pi^\dagger$. Moreover, the discrete kernel of $\pi$ will be of particular relevance and we shall denote it by
\[
\mathbb H_0^h : = \left\{(\bs u,\bs v)\in  \left({\bf H}({\bf curl},\mathbb R^3\setminus\Gamma)\right)^2 : \pi_t^+\bs v - \pi_t^-\bs u \in {\bf X}_h^\circ \;,\; \jump{\pi_t\bs u} = - \jump{\pi_t\bs v} \in {\bf Y}_h \right\}\,.
\]
With all these definitions in place we can now state the variational form of the transmission problem \eqref{eq:EquivalentTP} as follows:
\begin{subequations}
\label{eq:VariationalForm}
\begin{align}
\nonumber
& \text{Given data }\,(\bs \lambda,\bs\phi)\in {\bf H}_{\|}^{-1/2}({\rm div}_\Gamma,\Gamma) \times {\bf H}_\perp^{-1/2}({\rm curl}_\Gamma,\Gamma)\,,\,  \phantom{+}\\[-.4ex]
\nonumber
& \text{find } ({\bf E}_h^{scat},{\bf E}_h)\in \mathbb H^h \text{ such that: } \\[-.4ex]
\label{eq:VariationalFormA}
& \pi ({\bf E}_h^{scat},{\bf E}_h) = \left(\bs 0,\bs\phi\right) \qquad \text{ and }\\[-.4ex]
\label{eq:VariationalFormB}
& {\rm B}\left(({\bf E}_h^{scat},{\bf E}_h),(\bs u,\bs v)\right) = -\langle\mu_+^{-1}\bs\lambda,\pi_t^+\bs u\rangle_\Gamma \quad \forall\,(\bs u,\bs v)\in \mathbb H_0^h\,.
\end{align}
\end{subequations}

\begin{proposition}
\label{pr:2}
A pair of functions $({\bf E}_h^{scat},{\bf E}_h)\in \left({\bf H}({\bf curl}^2,\mathbb R^3\setminus\Gamma)\right)^2$ are a solution of the transmission problem
\eqref{eq:EquivalentTP} if and only if they satisfy \eqref{eq:VariationalForm}.
\end{proposition}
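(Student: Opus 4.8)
The statement is the familiar equivalence between a strong transmission problem and its variational formulation, so the plan is to prove the two implications after first sorting the six conditions of \eqref{eq:EquivalentTP} according to how they enter \eqref{eq:VariationalForm}. The volume equations \eqref{eq:EquivalentTPA} and \eqref{eq:EquivalentTPB} correspond to the $\mathbf H({\bf curl}^2,\mathbb R^3\setminus\Gamma)$-regularity built into $\mathbb H^h$ together with testing against functions supported away from $\Gamma$; the discrete jump constraints \eqref{eq:EquivalentTPE} are, by construction, the membership $({\bf E}_h^{scat},{\bf E}_h)\in\mathbb H^h$; the transmission condition \eqref{eq:EquivalentTPC} together with the first component of \eqref{eq:EquivalentTPF} is exactly the essential constraint \eqref{eq:VariationalFormA}, once the definitions of $\pi$ and of the polar set ${\bf X}_h^\circ$ are unwound. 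The remaining two — the flux identity \eqref{eq:EquivalentTPD} and the membership in ${\bf Y}_h^\circ$ recorded in the second component of \eqref{eq:EquivalentTPF} — are the \emph{natural} conditions, and they are the content of the weak equation \eqref{eq:VariationalFormB}.

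For the forward implication I would assume \eqref{eq:EquivalentTP} and check \eqref{eq:VariationalForm} term by term; only \eqref{eq:VariationalFormB} requires work. Given a test pair $(\bs u,\bs v)\in\mathbb H_0^h$, I would split each $\mathbb R^3$-inner product defining $\mathrm B$ into its $\Omega_\pm$ contributions and integrate by parts with \eqref{eq:ibpInt} to transfer one ${\bf curl}$ onto the solution. The equations \eqref{eq:EquivalentTPA} and \eqref{eq:EquivalentTPB} annihilate all volume terms, leaving
\[
\mathrm B\big(({\bf E}_h^{scat},{\bf E}_h),(\bs u,\bs v)\big)=\mu_+^{-1}\big(\langle\gamma_t^+{\bf curl\,E}_h^{scat},\pi_t^+\bs u\rangle_\Gamma-\langle\gamma_t^-{\bf curl\,E}_h^{scat},\pi_t^-\bs u\rangle_\Gamma\big)+\mu_-^{-1}\big(\langle\gamma_t^+{\bf curl\,E}_h,\pi_t^+\bs v\rangle_\Gamma-\langle\gamma_t^-{\bf curl\,E}_h,\pi_t^-\bs v\rangle_\Gamma\big).
\]
Using the $\mathbb H_0^h$-constraints $\pi_t^+\bs v-\pi_t^-\bs u\in{\bf X}_h^\circ$ and $\jump{\pi_t\bs u}=-\jump{\pi_t\bs v}\in{\bf Y}_h$ to eliminate the interior test traces, and invoking that $\jump{\gamma_t{\bf curl\,E}_h}\in{\bf X}_h$ annihilates ${\bf X}_h^\circ$ while $\mu_-^{-1}\gamma_t^+{\bf curl\,E}_h-\mu_+^{-1}\gamma_t^-{\bf curl\,E}_h^{scat}\in{\bf Y}_h^\circ$ annihilates ${\bf Y}_h$, the right-hand side collapses — after regrouping the dualities — to $-\langle\mu_+^{-1}\bs\lambda,\pi_t^+\bs u\rangle_\Gamma$ by \eqref{eq:EquivalentTPD}, which is precisely \eqref{eq:VariationalFormB}.

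For the converse I would assume \eqref{eq:VariationalForm}. Regularity and \eqref{eq:EquivalentTPE} come from $\mathbb H^h$, while \eqref{eq:EquivalentTPC} and the first half of \eqref{eq:EquivalentTPF} come from \eqref{eq:VariationalFormA}. To recover \eqref{eq:EquivalentTPA} and \eqref{eq:EquivalentTPB} I would test \eqref{eq:VariationalFormB} against $(\bs u,\bs 0)$ and $(\bs 0,\bs v)$ with $\bs u,\bs v$ smooth and compactly supported in $\mathbb R^3\setminus\Gamma$: such pairs lie in $\mathbb H_0^h$ with vanishing traces, so the equation forces the PDEs distributionally. With the PDEs established, the same integration by parts as above — now with the natural conditions still unknown — turns \eqref{eq:VariationalFormB} into
\[
-\big\langle\mu_+^{-1}\jump{\gamma_t{\bf curl\,E}_h^{scat}}+\mu_-^{-1}\jump{\gamma_t{\bf curl\,E}_h},\pi_t^+\bs u\big\rangle_\Gamma+\big\langle\mu_-^{-1}\gamma_t^+{\bf curl\,E}_h-\mu_+^{-1}\gamma_t^-{\bf curl\,E}_h^{scat},\jump{\pi_t\bs u}\big\rangle_\Gamma=-\langle\mu_+^{-1}\bs\lambda,\pi_t^+\bs u\rangle_\Gamma.
\]
Since $\pi_t^+\bs u$ and $\jump{\pi_t\bs u}\in{\bf Y}_h$ can be prescribed independently among test pairs in $\mathbb H_0^h$ (by the surjectivity of $\pi_t$ and the liftings guaranteed earlier), I would first take $\jump{\pi_t\bs u}=\bs 0$ and let $\pi_t^+\bs u$ range over ${\bf H}_\perp^{-1/2}({\rm curl}_\Gamma,\Gamma)$; the ${\bf H}_\|^{-1/2}$–${\bf H}_\perp^{-1/2}$ duality then yields \eqref{eq:EquivalentTPD}. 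Taking instead $\pi_t^+\bs u=\bs 0$ and letting $\jump{\pi_t\bs u}$ range over ${\bf Y}_h$ yields, by the definition of the polar set, the membership in ${\bf Y}_h^\circ$ of the second component of \eqref{eq:EquivalentTPF}.

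The routine but error-prone part is the sign- and duality-bookkeeping in the integration by parts; the genuine obstacle is the last step, namely justifying that the exterior trace $\pi_t^+\bs u$ and the jump $\jump{\pi_t\bs u}$ of a test pair really are free and independent over their respective spaces subject to the $\mathbb H_0^h$-constraints. This is what lets one separate the single boundary identity into the flux equation \eqref{eq:EquivalentTPD} and the orthogonality contained in the second component of \eqref{eq:EquivalentTPF}, and it rests on the surjectivity of the tangential trace operators and the availability of the liftings $\pi_t^\dagger$, $\gamma_t^\dagger$ together with the duality that identifies ${\bf H}_\|^{-1/2}({\rm div}_\Gamma,\Gamma)$ with the dual of ${\bf H}_\perp^{-1/2}({\rm curl}_\Gamma,\Gamma)$.
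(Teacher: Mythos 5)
Your proposal is correct and follows essentially the same route as the paper's proof: integration by parts via \eqref{eq:ibpInt} plus the PDEs to reduce $\mathrm B$ to boundary dualities in the forward direction, and, conversely, compactly supported test functions to recover the distributional equations followed by a separation of the resulting boundary identity into \eqref{eq:EquivalentTPD} and the second component of \eqref{eq:EquivalentTPF}. If anything, your handling of the final separation is slightly more careful than the paper's: where the paper invokes test pairs of the form $(\bs u,\bs 0)$ (whose membership in $\mathbb H_0^h$ actually forces $\pi_t^+\bs u\in{\bf X}_h^\circ$), you construct pairs with both components nonzero and matching traces, which genuinely lets $\pi_t^+\bs u$ range over all of ${\bf H}_\perp^{-1/2}({\rm curl}_\Gamma,\Gamma)$ before appealing to duality.
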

\begin{proof}
Let $({\bf E}_h^{scat},{\bf E}_h)$ be a solution pair to \eqref{eq:EquivalentTP} and $\bs u,\bs v \in \mathbb H_0^h$. We first observe that \eqref{eq:EquivalentTPE} guarantees that $({\bf E}_h^{scat},{\bf E}_h)\in\mathbb H^h$. Moreover, given that $({\bf E}_h^{scat},{\bf E}_h)$ satisfy \eqref{eq:EquivalentTPA} and \eqref{eq:EquivalentTPB}, splitting the integrals over $\mathbb R^3$ as the sum of integrals over $\Omega_-$ and $\Omega_+$ and using the integration by parts formulas \eqref{eq:ibpInt} the bilinear form can be expressed as: 
{\small \begin{align*}
{\rm B}\!\left(\!({\bf E}_h^{scat}\!,{\bf E}_h), (\bs u,\bs v)\!\right) :=\,& \mu_+^{-1}\Big(\!\!\left({\bf curl\,E}_h^{scat},{\bf curl\,}\bs u\right)_{\mathbb R^3}\! + \! (s/c_+)^2\!\left({\bf E}_h^{scat},\bs u\right)_{\mathbb R^3}\!\!\Big) \!+\! \mu_-^{-1}\Big(\!\!\left({\bf curl\,E}_h,{\bf curl\,}\bs v\right)_{\mathbb R^3} \!+ \! (s/c_-)^2\!\left({\bf E}_h,\bs v\right)_{\mathbb R^3}\!\!\Big) \\
=\,& \mu_+^{-1}\!\Big(\!\!\left({\bf curl\,E}_h^{scat}\!,{\bf curl\,}\bs u\right)_{\mathbb R^3}\! - \!\left({\bf curl\,curl\,E}_h^{scat}\!,\bs u\right)_{\mathbb R^3}\!\!\Big) \!+\! \mu_-^{-1}\!\Big(\!\!\left({\bf curl\,E}_h,{\bf curl\,}\bs v\right)_{\mathbb R^3} \! - \! \left({\bf curl\,curl\,E}_h,\bs v\right)_{\mathbb R^3}\!\!\Big) \\
=\,& -\left\langle\mu_+^{-1}\jump{\gamma_t{\bf E}_h^{scat}} + \mu_-^{-1}\jump{\gamma_t{\bf E}_h^{\phantom{t}}},\pi_t^+\bs u\right\rangle_\Gamma - \;\left\langle\mu_+^{-1}\gamma_t^-{\bf E}_h^{scat} - \mu_-^{-1}\gamma_t^+{\bf E}_h^{\phantom{t}}, \jump{\pi_t\bs u}\right\rangle_\Gamma \\
& - \left\langle -\mu_-^{-1}\gamma_t^+{\bf curl\, E}, \jump{\pi_t\bs u} + \jump{\pi_t\bs v}\right\rangle_\Gamma +\; \left\langle -\mu_-^{-1}\jump{\gamma_t{\bf curl\, E}}, \pi_t^+\bs v - \pi_t^-\bs u\right\rangle_\Gamma\,.
\end{align*} }
Focusing on the boundary terms at the end of the expression above we notice that:
\begin{enumerate}
\item The first term equals $-\langle\lambda,\pi_t^+\bs u\rangle_\Gamma$ due to \eqref{eq:EquivalentTPD}.
\item The second term vanishes as a consequence of $\jump{\pi_t\bs u}\in {\bf Y}_h$ and the second component of \eqref{eq:EquivalentTPF}.
\item The third term vanishes since $ \jump{\pi_t\bs u} + \jump{\pi_t\bs v} = \bs 0$.
\item The fourth term vanishes since $\pi_t^+\bs v - \pi_t^-\bs u \in {\bf X}_h^\circ$ and $\jump{\gamma_t{\bf curl\, E}}\in {\bf X}_h$ by the first component of \eqref{eq:EquivalentTPE}.
\end{enumerate}
These points prove that indeed \eqref{eq:VariationalFormB} is satisfied. Finally, we see that  \eqref{eq:EquivalentTPC} and the first component of \eqref{eq:EquivalentTPF} guarantee that $\pi ({\bf E}_h^{scat},{\bf E}_h) = \left(\bs 0,\bs\phi\right)$ and thus $({\bf E}_h^{scat},{\bf E}_h)$ satisfy \eqref{eq:VariationalForm}.

On the other hand, if $({\bf E}_h^{scat},{\bf E}_h)\in \mathbb H^h$ is a solution pair to \eqref{eq:VariationalForm}, then $\pi ({\bf E}_h^{scat},{\bf E}_h) = \left(\bs 0,\bs\phi\right)$ implies both \eqref{eq:EquivalentTPC} and the first component of \eqref{eq:EquivalentTPF}, while \eqref{eq:EquivalentTPE} is ingrained in $\mathbb H^h$. Using the integration by parts formulas \eqref{eq:ibpInt}, and the fact that
\[
\left\langle\gamma_t^+\bs u,\pi_t^+\widetilde{\bs u} \right\rangle_\Gamma - \left\langle\gamma_t^-\bs u,\pi_t^-\widetilde{\bs u} \right\rangle_\Gamma  = -\left\langle\jump{\gamma_t\bs u},\pi_t^-\widetilde{\bs u} \right\rangle_\Gamma - \left\langle\gamma_t^+\bs u,\jump{\pi_t\widetilde{\bs u}} \right\rangle_\Gamma
\]
we can obtain
\begin{align}
\nonumber
{\rm B}\left((\bs u,\widetilde{\bs u}), (\bs v,\widetilde{\bs v})\right) :=\,& \mu_+^{-1}\Big(\left({\bf curl\, curl\,}\bs u,\widetilde{\bs u}\right)_{\mathbb R^3} +  (s/c_+)^2\left(\bs u,\widetilde{\bs u}\right)_{\mathbb R^3}\Big) + \mu_-^{-1}\Big(\left({\bf curl\, curl\,}\bs v,\widetilde{\bs v}\right)_{\mathbb R^3} +  (s/c_-)^2\left(\bs v,\widetilde{\bs v}\right)_{\mathbb R^3}\Big) \\
\nonumber
& -\mu_+^{-1}\Big( \left\langle\jump{\gamma_t\bs u},\pi_t^-\widetilde{\bs u} \right\rangle_\Gamma + \left\langle\gamma_t^+\bs u,\jump{\pi_t\widetilde{\bs u}} \right\rangle_\Gamma   \Big) - \mu_-^{-1}\Big(\left\langle\jump{\gamma_t\bs v},\pi_t^-\widetilde{\bs v} \right\rangle_\Gamma + \left\langle\gamma_t^+\bs v,\jump{\pi_t\widetilde{\bs v}} \right\rangle_\Gamma\Big)\,.
\end{align}
Thus, equation \eqref{eq:VariationalFormB} implies that
\begin{align}
\nonumber
\mu_+^{-1}\left( \left({\bf curl\,curl\,E}_h^{scat},\bs u\right)_{\mathbb R^3}\right.& +   \left.(s/c_+)^2\left({\bf E}_h^{scat},\bs u\right)_{\mathbb R^3}\right)
+ \mu_-^{-1}\left( \left({\bf curl\,curl\, E}_h,\bs v\right)_{\mathbb R^3} +  (s/c_-)^2\left({\bf E}_h,\bs v\right)_{\mathbb R^3}\right) \\
\nonumber
 =\,&\;\; \mu_+^{-1}\Big( \left\langle\jump{\gamma_t{\bf curl\,E}_h^{scat}},\pi_t^-\bs u \right\rangle_\Gamma + \left\langle\gamma_t^+{\bf curl\,E}_h^{scat},\jump{\pi_t\bs u} \right\rangle_\Gamma   \Big)  -\langle\mu_+^{-1}\bs\lambda,\pi_t^+\bs u\rangle_\Gamma \\
 \label{eq:ExpandedB}
& + \mu_-^{-1}\Big(\left\langle\jump{\gamma_t{\bf curl\,E}_h^{\phantom{t}}},\pi_t^-\bs v \right\rangle_\Gamma + \left\langle\gamma_t^+{\bf curl\,E}_h,\jump{\pi_t\bs v} \right\rangle_\Gamma\Big)\,.
\end{align}
Now, picking test functions of the form $(\bs u,\bs 0)$ and $(\bs 0, \bs v)$ for $\bs u,\bs v$ infinitely differentialbe and compactly supported in $\mathbb R^3\setminus\Gamma$ in the expression above, we see that the following two equalities must hold independently:
\[
\left({\bf curl\,curl\,E}_h^{scat},\bs u\right)_{\mathbb R^3} +  (s/c_+)^2\left({\bf E}_h^{scat},\bs u\right)_{\mathbb R^3} = \; 0 \; =
\left({\bf curl\,curl\,E}_h,\bs v\right)_{\mathbb R^3} +  (s/c_-)^2\left({\bf E}_h,\bs v\right)_{\mathbb R^3}\,.
\]
Hence, equations \eqref{eq:EquivalentTPA} and \eqref{eq:EquivalentTPB} are verified, as the expressions above are simply their distributional forms. We can therefore drop the terms on the left hand side of \eqref{eq:ExpandedB} and rearrange the remaining terms to obtain
{\begin{equation}
\label{eq:zzz}
 \left\langle\mu_-^{-1}\gamma_t^+{\bf curl\,E}_h - \mu_+^{-1}\gamma_t^-{\bf curl\,E}_h^{scat},\jump{\pi_t\bs v} \right\rangle_\Gamma = -\left\langle\mu_+^{-1}\left(\jump{\gamma_t{\bf curl\,E}_h^{scat}}-\bs\lambda\right) +\mu_-^{-1}\jump{\gamma_t{\bf curl\, E}_h^{\phantom{t}}},\pi_t^+\bs u \right\rangle_\Gamma\,,
\end{equation} }
where we used the fact that for our test functions $\jump{\pi_t\bs u} = - \jump{\pi_t\bs v}$, and that, when testing with elements of ${\bf X}_h$, $\pi_t^-\bs u = \pi_t^+\bs v$. Since the tangential projection $\pi_t^+$ is surjective and we can take test functions of the form $(\bs u,\bs 0)$, from the expression above we conclude that for all $\bs\xi\in {\bf H}_{\perp}^{-1/2}(\mathrm{curl}_{\Gamma}, \Gamma)$
\[
\left\langle\mu_+^{-1}\left(\jump{\gamma_t{\bf curl\,E}_h^{scat}}-\bs\lambda\right) +\mu_-^{-1}\jump{\gamma_t{\bf curl\, E}_h^{\phantom{t}}},\bs \xi \right\rangle_\Gamma = 0\,,
\]
which implies \eqref{eq:EquivalentTPD}. This then forces the left hand side of \eqref{eq:zzz} to be equal to zero and thus, since $\jump{\pi_t\bs v}\in{\bf Y}_h$ is arbitrary, we obtain the second component of \eqref{eq:EquivalentTPF}. This concludes the proof. $\Box$
\end{proof}

% =================================
\noindent\textbf{Well posedness.} We now prove the unique solvability of the variational formulation \eqref{eq:VariationalForm} and establish stability bounds for the solutions in terms of the Laplace parameter $s$. Due to the equivalence theorems proved in this section, this result will imply the solvability of the BIE system \eqref{eq:WeakBIE}. Moreover, the bounds obtained for the \textit{volume} unknowns ${\bf E}_h^{scat}$ and ${\bf E}_h$ from \eqref{eq:VariationalForm}  will result in similar bounds for the \textit{boundary} unknowns $\bs \phi_+$ and $\phi_-$ appearing in \eqref{eq:WeakBIE}. The process of sidestepping the direct analysis of the BIE in favor of a transmission problem and then post processing the result to recover estimates for the solutions of the BIE is similar in spirit to the technique introduced by N\'edelec and Planchard \cite{NePl1973}.

\begin{theorem}
\label{thm:1}
The boundary integral equation system \eqref{eq:WeakBIE} is uniquely solvable and the solution pair $({\bf j}_h,{\bf m}_h)$ satisfies the stability estimate
\begin{equation}
\label{eq:StabilityDensity}
 \|{\bf j}_h\|_{\|} + \|{\bf m}_h\|_{\perp} \lesssim \frac{|s|^2}{\sigma\underline{\sigma}^3}\left(\|\bs\lambda\|_{\|} +\|\bs\phi\|_{\perp}\right)\,.
\end{equation}
Moreover, $({\bf E}_h^{scat},{\bf E}_h)$ defined as in \eqref{eq:IntegralRepresentations} and solving \eqref{eq:VariationalForm} are bounded as 
\begin{equation}
\label{eq:Stability}
\triple{({\bf E}_h^{scat},{\bf E}_h)}_s \lesssim \frac{|s|^2}{\sigma\underline{\sigma}^2}\left(\|\bs\lambda\|_{\|} +\|\bs\phi\|_{\perp}\right)\,.
\end{equation}

\end{theorem}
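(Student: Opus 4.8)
The plan is to lean entirely on the two equivalence results already in hand: by Propositions \ref{pr:1} and \ref{pr:2}, unique solvability of the weak BIE system \eqref{eq:WeakBIE} with the bound \eqref{eq:StabilityDensity} is equivalent to unique solvability of the variational transmission problem \eqref{eq:VariationalForm} with the energy bound \eqref{eq:Stability}, so I would work exclusively with the latter and only at the very end translate back to the densities. The variational problem carries the \emph{essential} constraint $\pi({\bf E}_h^{scat},{\bf E}_h)=(\bs 0,\bs\phi)$, which I would remove by lifting. Since $\pi$ is built from $\pi_t$ it admits the bounded, $s$-independent pseudo-inverse $\pi^\dagger$, so I construct $(\bs E_0^{scat},\bs E_0):=\pi^\dagger(\bs 0,\bs\phi)$ and seek the correction $(\bs U,\bs V):=({\bf E}_h^{scat},{\bf E}_h)-(\bs E_0^{scat},\bs E_0)$, which lies in $\ker\pi=\mathbb H_0^h$ and solves ${\rm B}\!\left((\bs U,\bs V),(\bs u,\bs v)\right)=-\langle\mu_+^{-1}\bs\lambda,\pi_t^+\bs u\rangle_\Gamma-{\rm B}\!\left((\bs E_0^{scat},\bs E_0),(\bs u,\bs v)\right)$ for all $(\bs u,\bs v)\in\mathbb H_0^h$.

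On the closed subspace $\mathbb H_0^h$ I would invoke the Lax--Milgram lemma. Because ${\rm B}$ is bilinear rather than sesquilinear, I pass to the sesquilinear form $(\bs u,\bs v)\mapsto {\rm B}\!\left(\cdot,(\overline{\bs u},\overline{\bs v})\right)$; as ${\bf X}_h$, ${\bf Y}_h$ and their polars are conjugation invariant, $\mathbb H_0^h$ is as well and testing with conjugates is legitimate. Boundedness and coercivity of this form are precisely \eqref{eq:NormPropertiesA} and \eqref{eq:NormPropertiesB}, so existence, uniqueness, and (with homogeneous data) the uniqueness claim all follow at once. Choosing the test function to be the conjugate of the solution and using \eqref{eq:NormPropertiesB} yields the a priori bound $\sigma\triple{(\bs U,\bs V)}_s^2\lesssim|s|\big(|\langle\mu_+^{-1}\bs\lambda,\pi_t^+\overline{\bs U}\rangle_\Gamma|+|{\rm B}((\bs E_0^{scat},\bs E_0),(\overline{\bs U},\overline{\bs V}))|\big)$, where I have used $\triple{(\overline{\bs U},\overline{\bs V})}_s=\triple{(\bs U,\bs V)}_s$.

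To reach \eqref{eq:Stability} I would bound the right-hand side term by term. Continuity of the tangential trace together with the lower estimate in \eqref{eq:NormEquiv} gives $|\langle\mu_+^{-1}\bs\lambda,\pi_t^+\overline{\bs U}\rangle_\Gamma|\lesssim\|\bs\lambda\|_{\|}\,\underline{\sigma}^{-1}\triple{(\bs U,\bs V)}_s$, while \eqref{eq:NormPropertiesA} controls the second term by $\triple{(\bs E_0^{scat},\bs E_0)}_s\triple{(\bs U,\bs V)}_s$; the lifting, bounded by $\|\bs\phi\|_\perp$ in the $\triple{\cdot}_1$ norm, costs one factor $|s|/\underline{\sigma}$ through the upper estimate in \eqref{eq:NormEquiv}, so $\triple{(\bs E_0^{scat},\bs E_0)}_s\lesssim(|s|/\underline{\sigma})\|\bs\phi\|_\perp$. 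Cancelling one power of $\triple{(\bs U,\bs V)}_s$ and adding the lifting bound via the triangle inequality produces \eqref{eq:Stability} once $\underline{\sigma}\le\sigma\le|s|$ and $\underline{\sigma}\le1$ are used to absorb all contributions into $|s|^2\sigma^{-1}\underline{\sigma}^{-2}$. For the densities I would use the recovery formulas ${\bf m}_h=\jump{\pi_t{\bf E}_h}$ and ${\bf j}_h=-(s\mu_-)^{-1}\jump{\gamma_t{\bf curl\,E}_h}$ from Proposition \ref{pr:1}: continuity of $\pi_t$ plus \eqref{eq:NormEquiv} gives $\|{\bf m}_h\|_\perp\lesssim\underline{\sigma}^{-1}\triple{({\bf E}_h^{scat},{\bf E}_h)}_s$, and for ${\bf j}_h$ I would invoke \eqref{eq:EquivalentTPB} to replace ${\bf curl\,curl\,E}_h$ by $-(s/c_-)^2{\bf E}_h$, so that $\|{\bf curl\,E}_h\|_{{\bf H}({\bf curl})}\lesssim(1+|s|)\triple{({\bf E}_h^{scat},{\bf E}_h)}_s$; the prefactor $(s\mu_-)^{-1}$ and $(1+|s|)\underline{\sigma}\lesssim|s|$ then convert \eqref{eq:Stability} into the rate $|s|^2\sigma^{-1}\underline{\sigma}^{-3}$ of \eqref{eq:StabilityDensity}.

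The main obstacle I anticipate is not the abstract well-posedness, which is a textbook Lax--Milgram application once the conjugation trick is in place, but the exact bookkeeping of the powers of $|s|$ and $\underline{\sigma}$. Each step contributes a different factor---coercivity a $\sigma^{-1}$, the trace pairing and norm equivalence a $\underline{\sigma}^{-1}$, the lifting an extra $|s|/\underline{\sigma}$, and the passage from ${\bf curl\,curl\,E}_h$ back to $L^2$ via the PDE an additional power of $|s|$ only partly cancelled by $(s\mu_-)^{-1}$---and the delicate point is to verify that these collapse exactly to the stated exponents, using solely $\underline{\sigma}=\min\{1,\sigma\}$ and $\underline{\sigma}\le|s|$, with no hidden loss in the worst case $\sigma\to0^+$.
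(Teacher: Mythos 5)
Your proposal is correct and follows essentially the same route as the paper's own proof: Lax--Milgram on the kernel space $\mathbb H_0^h$, removal of the essential condition by an $s$-independent lifting $\pi^\dagger(\bs 0,\bs\phi)$, the coercivity estimate \eqref{eq:NormPropertiesB} with conjugate test functions, and recovery of the density bounds from the jump formulas, the trace inequality, and the PDE \eqref{eq:EquivalentTPB}, with identical bookkeeping of the powers of $|s|$, $\sigma$, and $\underline\sigma$. The only difference is cosmetic (your lifting/correction notation is swapped relative to the paper's), and your explicit remark on conjugation invariance of $\mathbb H_0^h$ makes precise a point the paper leaves implicit.
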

\begin{proof}
We start by noting that
\begin{equation}
\label{eq:ellbound}
|\langle\mu_+^{-1}\bs\lambda,\pi_t^+\bs u\rangle_\Gamma| \lesssim \|\bs\lambda\|_{\|}\triple{\bs u}_1 \lesssim \frac{1}{\underline\sigma}\|\bs\lambda\|_{\|}\triple{\bs u}_s \lesssim \frac{1}{\underline\sigma}\|\bs\lambda\|_{\|}\triple{(\bs u,\bs v)}_s.
\end{equation}
Thus, in view of \eqref{eq:NormProperties}, we can apply the Lax-Milgram lemma to conclude that the problem is uniquely solvable in $\mathbb H_0^h$.  We must then build a particular pseudo inverse of the generalized trace $\pi$ that will allow us to take advantage of this.

Since the tangential projections $\pi_t^+$ and $\pi_t^-$ are surjective, given $(\bs \lambda,\bs\phi)\in {\bf H}_{\|}^{-1/2}({\rm div}_\Gamma,\Gamma) \times {\bf H}_\perp^{-1/2}({\rm curl}_\Gamma,\Gamma)$ we can find functions ${\bf E}_1$ and ${\bf E}_2$ such that
\[
\pi_t^+{\bf E}_1 = \bs\phi\,, \qquad \pi_t^-{\bf E}_1 = \bs 0\,, \qquad \text{ and } \qquad \pi_t^+{\bf E}_2 = \pi_t^-{\bf E}_2 = \bs 0\,. 
\]
So that $\pi^\dagger (\bs 0,\bs \phi) = ({\bf E}_1,{\bf E}_2)$. Note that this definition is independent of the discrete spaces. We then use this to construct functions
\[
{\bf E}_0^{scat} : = {\bf E}_h^{scat} - {\bf E}_1 \qquad \text{ and } \qquad {\bf E}_0 := {\bf E}_h-{\bf E}_2\,.
\]
By defining them in this way, we ensure that pair $({\bf E}_0^{scat},{\bf E}_0) = ({\bf E}_h^{scat},{\bf E}_h) - \pi^\dagger(\bs 0,\bs \phi)$ belongs to $\mathbb H_0^h$. 

Then, using \eqref{eq:NormPropertiesB} as starting point we compute
\begin{align*}
\sigma\triple{({\bf E}_0^{scat},{\bf E}_0)}_s^2 \lesssim\,& {\rm Re\,}\left[\,\overline s\, {\rm B}\left(({\bf E}_0^{scat},{\bf E}_0),(\overline{{\bf E}}_0^{scat},\overline{{\bf E}}_0)\right)\right] & \\
=\,& {\rm Re\,}\left[\,\overline s\, \left( - \langle\mu_+^{-1}\bs\lambda,\pi_t^+\overline{{\bf E}}_0^{scat}\rangle_\Gamma - {\rm B}\left(({\bf E}_1,{\bf E}_2),(\overline{{\bf E}}_0^{scat},\overline{{\bf E}}_0)\right)\right)\right] & \\
\lesssim\,& |s|\left(\frac{1}{\underline\sigma}\|\bs\lambda\|_{\|} + \triple{({\bf E}_1,{\bf E}_2)}_s\right)\triple{({\bf E}_0^{scat},{\bf E}_0)}_s \qquad \qquad & {\small \text{(using \eqref{eq:NormPropertiesA} and \eqref{eq:ellbound})}}\\
\lesssim\,& |s|\left(\frac{1}{\underline\sigma}\|\bs\lambda\|_{\|} + \frac{|s|}{\underline{\sigma}}\triple{({\bf E}_1,{\bf E}_2)}_1\right)\triple{({\bf E}_0^{scat},{\bf E}_0)}_s  & {\small \text{(using \eqref{eq:NormEquiv})}}\\
\lesssim\,& \frac{|s|^2}{\underline{\sigma}^2}\left(\|\bs\lambda\|_{\|} + \|\bs\phi\|_{\perp}\right)\triple{({\bf E}_0^{scat},{\bf E}_0)}_s\,. &
\end{align*}
Now, recalling that $({\bf E}_h^{scat},{\bf E}_h) = ({\bf E}_0^{scat},{\bf E}_0) + ({\bf E}_1,{\bf E}_2)$, the estimate above leads to
\[
\triple{({\bf E}_h^{scat},{\bf E}_h)}_s \lesssim \frac{|s|^2}{\sigma\underline{\sigma}^2}\left(\|\bs\lambda\|_{\|} + \|\bs\phi\|_{\perp}\right) + \triple{({\bf E}_1,{\bf E}_2)}_s \lesssim \frac{|s|^2}{\sigma\underline{\sigma}^2}\left(\|\bs\lambda\|_{\|} +\|\bs\phi\|_{\perp}\right) +  \frac{|s|}{\underline\sigma}\|\bs\phi\|_{\perp}\,
\]
which implies \eqref{eq:Stability}.

We now use the fact that the densities ${\bf j}_h$ and ${\bf m}_h$ are jumps in the traces of ${\bf E}_h$ and ${\bf curl\, E}_h$ to estimate their norms. For ${\bf j}_h$ we have
\begin{align*}
 \|\, {\bf j}_h \|_{\|} 
 & = \left\|\;  \jump{\, (s\mu_-)^{-1} \gamma_t {\bf curl\, E}_h}\; \right\|_{\|}  \qquad \quad & \text{{\small (From \eqref{eq:IntegralRepresentationsD})}} \\
&\lesssim  \frac{1}{|s|}\left(\| {\bf curl\,E}_h\|_{{\bf L}^2(\mathbb{R}^3\setminus\Gamma)} +  \| {\bf curl \, curl \,E}_h\|_{ {\bf  L}^2(\mathbb{R}^3 \setminus \Gamma)}\right) & \text{\small (Trace inequality)}
\\
& \lesssim\frac{1}{|s|}\left( \| {\bf curl\,E}_h\|_{{\bf L}^2(\mathbb{R}^3\setminus\Gamma)} + (|s|/c_-)^2 \| {\bf E}_h\|_{ {\bf L}^2(\mathbb{R}^3 \setminus \Gamma)}\right)   &  \text{\small (From \eqref{eq:EquivalentTPB})} \\
& \lesssim \frac{\max \{ 1, |s|\}}{|s|} \triple{ {\bf E}_h}_{s}  \\
&\lesssim  \frac{1}{\underline{\sigma}} \triple{ {\bf E}_h}_{s} \\
\intertext{ while for ${\bf m}_h$ it holds}
\| {\bf m}_h \|_{\perp} &  = \left\| \; \jump{ \pi_t{\bf  E}_h}\right\|_{\perp}  \lesssim \triple{{\bf E}_h}_{1} \lesssim  \frac{1}{\underline{\sigma}} \, \triple{ {\bf E}_h}_{s}\,.
\end{align*}
Therefore, the bound \eqref{eq:StabilityDensity} is obtained by combining the previous two inequalities with  \eqref{eq:Stability}. $\Box$
\end{proof}
Let $\mathbb X$ be a Banach space; we say that an analytic function $A : \{s\in\mathbb C: {\rm Re\,}s>0\} \to \mathbb X$ is a \textit{hyperbolic symbol} of order $\mu\in\mathbb R$ and write $A\in \mathcal A(\mu,\mathbb X)$ if:
\begin{enumerate}
\item There exists a  positive, non increasing function $C_A:(0,\infty)\to(0,\infty)$ such that for all $\sigma\in(0,1]$
\[
C_A(\sigma) \leq K\sigma^{-\ell} \qquad \qquad \text{ for } K>0 \;\text{ and } \ell\geq 0\,.
\]

\item $A(s)$ is bounded as
\[
\|A(s)\| \leq C_A\left({\rm Re\,}s\right) |s|^\mu\,.
\]
\end{enumerate}
We can formulate the results that we have proven so far in terms of hyperbolic symbols as follows.
\begin{corollary}
\label{cor:symbols}
If we denote by
\[
{\rm S}(s):  {\bf H}_{\|}^{-1/2}({\rm div}_\Gamma,\Gamma) \times {\bf H}_\perp^{-1/2}({\rm curl}_\Gamma,\Gamma) \to {\bf X}_h\times{\bf Y}_h
\]
the solution operator taking problem data $(\bs\lambda,\bs\phi)$ and mapping it to the densities $({\bf j}_h,{\bf m}_h)$ satisfying \eqref{eq:WeakBIE}, and by 
\[
{\rm P(s)}: {\bf H}_{\|}^{-1/2}({\rm div}_\Gamma,\Gamma) \times {\bf H}_\perp^{-1/2}({\rm curl}_\Gamma,\Gamma) \to \mathbb H^h
\]
the operator mapping the data  $(\bs\lambda,\bs\phi)$ to the functions $({\bf E}_h^{scat},{\bf E}_h)$ satisfying the distributional system \eqref{eq:LDsystem} and obtained from the densities $({\bf j}_h,{\bf m}_h)$ through the post-processing step \eqref{eq:IntegralRepresentations}, then
\begin{subequations}
\begin{alignat}{6}
\label{eq:SymbolS}
{\rm S}(s) \in\,& \mathcal A(2, {\bf X}_h\times{\bf Y}_h) &\qquad \text{ and } \qquad C_{\rm S} = \frac{1}{\sigma\underline\sigma^3}\,, \\[.5ex]
\label{eq:SymbolP}
{\rm P}(s) \in\,& \mathcal A(2, \mathbb H^h) &\qquad \text{ and } \qquad C_{\rm P} = \frac{1}{\sigma\underline\sigma^3}\,.
\end{alignat}
\end{subequations}
\end{corollary}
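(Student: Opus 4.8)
The plan is to read the two conditions in the definition of a hyperbolic symbol directly off the stability estimates established in Theorem \ref{thm:1}, after first arguing that both solution operators depend analytically on the Laplace parameter. The growth bounds are already in hand; the genuine work is the analyticity and the correct conversion of norms for $P(s)$.

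I would begin with analyticity, which is the structural prerequisite for both claims. The fundamental solution $G(x,y;s) = e^{-s|x-y|}/(4\pi|x-y|)$ is entire in $s$ for each fixed pair $x\neq y$, and the standard mapping properties of the electromagnetic layer potentials and boundary integral operators show that $\mathcal V(s/c_\pm)$, $\mathcal K(s/c_\pm)$, $\widetilde{\mathcal V}(s/c_\pm)$ and $\widetilde{\mathcal K}(s/c_\pm)$ are analytic operator-valued functions of $s$ on the half-plane $\{{\rm Re}\,s>0\}$. Consequently the matrices $\mathbb L(s)$ and $\mathbb R(s)$ from \eqref{eq:Matrices} are analytic. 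Theorem \ref{thm:1} guarantees that $\mathbb L(s)$ is boundedly invertible for every such $s$, so $\mathbb L(s)^{-1}$ is analytic as well, and therefore ${\rm S}(s) = \mathbb L(s)^{-1}\mathbb R(s)$ is analytic with values in ${\bf X}_h\times{\bf Y}_h$. Since ${\rm P}(s)$ is obtained from the densities by applying the (analytic) potential operators in the post-processing step \eqref{eq:IntegralRepresentations}, it is analytic as a map into $\mathbb H^h$.

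With analyticity in place, condition (2) of the definition is immediate. For the density operator, estimate \eqref{eq:StabilityDensity} reads exactly $\|{\rm S}(s)(\bs\lambda,\bs\phi)\| \lesssim (|s|^2/(\sigma\underline\sigma^3))(\|\bs\lambda\|_{\|} + \|\bs\phi\|_{\perp})$, so ${\rm S}(s)$ has order $\mu = 2$ with $C_{\rm S}(\sigma) = 1/(\sigma\underline\sigma^3)$. For the field operator I would start from \eqref{eq:Stability}, which controls $\triple{\cdot}_s$ by $|s|^2/(\sigma\underline\sigma^2)$, and then convert to the fixed, $s$-independent energy norm $\triple{\cdot}_1$ on $\mathbb H^h$ using the left inequality in \eqref{eq:NormEquiv}, namely $\triple{\bs u}_1 \le \underline\sigma^{-1}\triple{\bs u}_s$. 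This introduces precisely one extra factor of $\underline\sigma^{-1}$ and yields $\|{\rm P}(s)\| \lesssim (|s|^2/(\sigma\underline\sigma^3))(\|\bs\lambda\|_{\|}+\|\bs\phi\|_{\perp})$, so that ${\rm P}(s)$ is also of order $2$ with $C_{\rm P}(\sigma) = 1/(\sigma\underline\sigma^3)$. It then remains to verify condition (1) for the common function $C(\sigma) = 1/(\sigma\underline\sigma^3)$: on the relevant range $\sigma\in(0,1]$ one has $\underline\sigma = \sigma$, so $C(\sigma) = \sigma^{-4}$, giving $C(\sigma)\le K\sigma^{-\ell}$ with $\ell = 4$ and $K$ absorbing the hidden constant; positivity is clear, and monotonicity follows piecewise since on $(0,1]$ the function is $\sigma^{-4}$ while on $(1,\infty)$ it is $\sigma^{-1}$ (as there $\underline\sigma=1$), both decreasing and agreeing at $\sigma=1$.

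The argument is largely bookkeeping once Theorem \ref{thm:1} is available, and I expect the only genuinely delicate point to be the analyticity of the solution operator---specifically the justification that the $s$-dependent boundary integral operators form an analytic family whose inverse is again analytic. Here I would lean on the established mapping properties of the electromagnetic potentials collected in the references rather than reprove them, and on the general principle that a locally bounded, pointwise-invertible analytic family of operators has an analytic inverse.
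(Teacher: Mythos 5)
Your proposal is correct and follows essentially the same route as the paper: the bound \eqref{eq:SymbolS} is read directly off \eqref{eq:StabilityDensity}, while \eqref{eq:SymbolP} comes from \eqref{eq:Stability} after converting the energy norm $\triple{\cdot}_s$ to the natural norm $\triple{\cdot}_1$ via \eqref{eq:NormEquiv}, which is exactly the source of the extra factor $\underline\sigma^{-1}$ that the paper points out in the remark following the corollary. Your explicit treatment of analyticity (writing ${\rm S}(s)=\mathbb L(s)^{-1}\mathbb R(s)$ and invoking the analyticity of a pointwise-invertible, locally bounded analytic operator family) and your verification of condition (1) for $C(\sigma)=1/(\sigma\underline\sigma^3)$ are details the paper leaves implicit, and both are handled correctly.
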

We point out that, in contrast with the bound obtained in Theorem \ref{thm:1},  the function $C_{\rm P}$ above has an additional factor of $\underline\sigma^{-1}$. This is due to the fact that the inequality \eqref{eq:Stability} in Theorem \ref{thm:1} involves the energy norm on the left hand side, while the definition of $C_{\rm P}$ is done with respect to the natural norm in $\left({\bf H}({\bf curl},\mathbb R^3\setminus\Gamma)\right)^2$. Thus, the additional factor is introduced via $\eqref{eq:NormEquiv}$.
%
% ==============================
\section{Time domain results}\label{sec:4}
% ==============================
%
We will now return to the time domain. Notationwise, all the functions appearing in this section are the time domain counterparts of those appearing in the previous two sections, $\mathcal L$ denotes the Laplace transform operator, $*$ is the convolution operator, and $\mathbb X, \mathbb Y$ are Banach spaces.

The estimates we have obtained so far can be transformed into time-domain statements thanks to the following result due to Sayas \cite[Proposition 3.2.2]{Sayas2016errata,Sayas2016} that we present here without proof. In the following statement,
\begin{theorem}[\cite{Sayas2016errata}] \label{thm:2}
Let $A = \mathcal{L}\{a\} \in \mathcal{A} (k + \alpha, \mathcal{B}(\mathbb X,\mathbb Y))$ with $\alpha\in [0, 1)$ and $k$ a non-negative integer.  If $ g \in \mathcal{C}^{k+1}(\mathbb{R}, \mathbb X)$ is causal and its derivative $g^{(k+2)}$ is integrable, then $a* g \in \mathcal{C}(\mathbb{R}, \mathbb Y)$ is causal and 
\[
\| (a*g)(t) \|_{\mathbb Y} \le 2^{\alpha} C_{\epsilon} (t) C_A (t^{-1}) \int_0^1 \|(\mathcal{P}_2g^{(k)})(\tau) \|_{\mathbb X} \; d\tau,
\]
where 
\[
C_{\epsilon} (t) := \frac{1}{2\sqrt{\pi}} \frac{\Gamma(\epsilon/2)}{\Gamma\left( (\epsilon+1)/2 \right) } \frac{t^{\epsilon}}{(1+ t)^{\epsilon}}, \qquad (\epsilon :=  1- \alpha \; \;  \mbox{and}\; \;  \mu = k +\alpha)
\]
and 
\[
(\mathcal{P}_2g) (t) :=  g + 2\dot{g} + \ddot{g}.
\]
\end{theorem}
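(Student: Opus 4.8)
The plan is to convert the Laplace-domain growth bound built into the hypothesis $A\in\mathcal A(k+\alpha,\mathcal B(\mathbb X,\mathbb Y))$ into a time-domain estimate through the inverse Laplace (Bromwich) integral, after two preparatory reductions that trade growth of the symbol in $s$ for derivatives of $g$. Throughout I write $s=\sigma+i\omega$ with $\sigma=\mathrm{Re}\,s>0$ and $G=\mathcal L\{g\}$, so that $\mathcal L\{a*g\}=A\,G$. The first reduction strips the integer part of the order: since $g$ is causal and of class $\mathcal C^{k+1}$ with integrable $g^{(k+2)}$, the boundary terms in repeated integration by parts vanish and $\mathcal L\{g^{(k)}\}(s)=s^{k}G(s)$. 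Writing $A(s)G(s)=A_0(s)\,\mathcal L\{g^{(k)}\}(s)$ with $A_0(s):=s^{-k}A(s)$, the hypothesis yields $\|A_0(s)\|\le C_A(\sigma)|s|^{\alpha}$, so $A_0\in\mathcal A(\alpha,\mathcal B(\mathbb X,\mathbb Y))$ with the \emph{same} $C_A$ and order $\alpha\in[0,1)$. This reduces matters to a symbol of order strictly below one, convolved against $g^{(k)}$, precisely the object in the claimed integrand.

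The second reduction makes the Bromwich integral absolutely convergent. Since $\mathcal L\{\mathcal P_2 h\}(s)=(1+s)^2\mathcal L\{h\}(s)$, I would factor
\[
A_0(s)\,\mathcal L\{g^{(k)}\}(s)=\frac{A_0(s)}{(1+s)^2}\,\mathcal L\{\mathcal P_2 g^{(k)}\}(s),
\]
whose operator factor obeys $\|A_0(s)/(1+s)^2\|\lesssim C_A(\sigma)|s|^{\alpha-2}$. Because $\alpha-2<-1$, this is integrable along any vertical line $\mathrm{Re}\,s=\sigma$, so the operator-valued representation
\[
(a*g)(t)=\frac{1}{2\pi i}\int_{\sigma-i\infty}^{\sigma+i\infty} e^{st}\,\frac{A_0(s)}{(1+s)^2}\,\mathcal L\{\mathcal P_2 g^{(k)}\}(s)\,ds
\]
converges absolutely, is independent of $\sigma>0$ by Cauchy's theorem together with the decay of the integrand, and defines a continuous $\mathbb Y$-valued function of $t$ by dominated convergence; causality follows from the holomorphy and polynomial growth of the integrand in the right half-plane via a Paley--Wiener argument.

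To estimate, I would take norms inside the integral, bound $\|\mathcal L\{\mathcal P_2 g^{(k)}\}(s)\|_{\mathbb X}\le\int_0^{\infty}e^{-\sigma\tau}\|(\mathcal P_2 g^{(k)})(\tau)\|_{\mathbb X}\,d\tau$, and use the elementary inequality $|s|\le|1+s|+1\le 2|1+s|$ (valid since $|1+s|\ge 1+\sigma\ge 1$) to replace $|s|^{\alpha}$ by $2^{\alpha}|1+s|^{\alpha}$. The frequency factor then reduces to $2^{\alpha}|1+s|^{\alpha-2}$, and the substitution $\omega=(1+\sigma)\xi$ turns $\int_{\mathbb R}|1+s|^{\alpha-2}\,d\omega$ into $(1+\sigma)^{\alpha-1}\int_{\mathbb R}(1+\xi^2)^{(\alpha-2)/2}\,d\xi$, a Beta integral equal to $\sqrt{\pi}\,\Gamma(\epsilon/2)/\Gamma((\epsilon+1)/2)$ with $\epsilon=1-\alpha$. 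Choosing the optimal contour $\sigma=t^{-1}$ produces the factor $C_A(t^{-1})$, makes $e^{\sigma t}=e$ an $O(1)$ constant, lets $e^{-\sigma\tau}\le 1$ together with causality of $a$ control the remaining time integral of $\|(\mathcal P_2 g^{(k)})(\tau)\|_{\mathbb X}$, and converts $(1+\sigma)^{\alpha-1}=(1+\sigma)^{-\epsilon}$ into exactly $t^{\epsilon}/(1+t)^{\epsilon}$. Combining the $\tfrac{1}{2\pi}$ prefactor with the $\sqrt{\pi}$ from the Beta integral reproduces $\tfrac{1}{2\sqrt{\pi}}\tfrac{\Gamma(\epsilon/2)}{\Gamma((\epsilon+1)/2)}$, so that collecting constants yields $2^{\alpha}C_\epsilon(t)\,C_A(t^{-1})$ times the time integral, as claimed.

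The routine part is the order reduction and the $\mathcal P_2$ factorization, which are immediate once causality guarantees vanishing initial data. The main obstacle is the rigorous handling of the operator-valued Bromwich representation—absolute convergence on vertical lines, contour independence, and the ensuing causality and continuity in $t$—followed by the \emph{sharp} evaluation of the frequency integral $\int_{\mathbb R}|s|^{\alpha}|1+s|^{-2}\,d\omega$, whose exact Gamma-function value and whose dependence on $\sigma$ are what pin down $C_\epsilon(t)$ and the precise factor $2^{\alpha}$; the careful bookkeeping of these constants and the justification of the optimal choice $\sigma=t^{-1}$ are where the delicate work lies.
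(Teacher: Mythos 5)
The paper does not actually prove this statement: it is imported verbatim (cf.\ ``that we present here without proof'') from Sayas's book, Proposition 3.2.2, via the errata, so the only meaningful comparison is with that reference proof---and your route (strip $s^{k}$ onto $g$ using vanishing causal initial data, factor $(1+s)^{2}$ through $\mathcal P_2$, invert along a Bromwich line, evaluate $\int_{\mathbb R}|1+s|^{\alpha-2}\,d\omega$ exactly as a Beta/Gamma integral, choose $\sigma=t^{-1}$) is essentially that proof. However, there is one concrete gap in your handling of the data term. Bounding $e^{-\sigma\tau}\le 1$ in $\int_0^\infty e^{-\sigma\tau}\|(\mathcal P_2 g^{(k)})(\tau)\|_{\mathbb X}\,d\tau$ leaves $\int_0^\infty\|(\mathcal P_2 g^{(k)})(\tau)\|_{\mathbb X}\,d\tau$, which is generically \emph{infinite}: the hypotheses only make $g^{(k+2)}$ integrable, so $g^{(k+1)}$ tends to a possibly nonzero limit, $g^{(k)}$ can grow linearly, and so can $\mathcal P_2 g^{(k)}=g^{(k)}+2g^{(k+1)}+g^{(k+2)}$. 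Invoking ``causality of $a$'' does not by itself truncate this integral, because $\mathcal L\{\mathcal P_2 g^{(k)}\}(s)$ sees all of $[0,\infty)$. The missing device is the causal cutoff: for fixed $t$, replace $g$ by $\tilde g$ with $\tilde g=g$ on $(-\infty,t]$ and, on $[t,\infty)$, continue $g^{(k)}$ by the (decaying) solution of $h''+2h'+h=0$ with matching Cauchy data at $t$, so that $\mathcal P_2\tilde g^{(k)}\equiv 0$ there while $\tilde g$ keeps $\mathcal C^{k+1}$ regularity and an integrable $(k+2)$nd derivative; causality of $a$ then gives $(a*g)(t)=(a*\tilde g)(t)$, and your estimate closes with the time integral over $[0,t]$ (the upper limit $1$ in the statement as printed is a typo for $t$, as the paper's own Theorem on time-domain stability confirms).

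Two smaller points. First, your own bookkeeping does not reproduce the stated constant ``exactly'': with $\sigma=t^{-1}$ the factor $e^{\sigma t}=e$ survives, so the argument yields $e\,2^{\alpha}C_\epsilon(t)C_A(t^{-1})$; you acknowledge the $e$ as $O(1)$ while simultaneously claiming to pin down $C_\epsilon(t)$ precisely---one of these must give, and this is exactly the kind of constant-level discrepancy the cited errata exists to settle. Second, the causality claims (of $a$, hence of $a*g$) need the operator-valued Paley--Wiener-type characterization of Laplace transforms of causal distributions of tempered growth (Sayas, Proposition 3.1.2), which you correctly gesture at but should cite or prove; dominated convergence for continuity in $t$ and contour independence are fine as you set them up, since $\|A_0(s)(1+s)^{-2}\|\lesssim C_A(\sigma)|s|^{\alpha}|1+s|^{-2}$ is absolutely integrable on vertical lines for $\alpha<1$.
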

Given \textbf{time domain} data $(\gamma_t^+{\bf curl\,E}^{inc},\pi_t^+{\bf E}^{inc})$ the solution to the time domain system \eqref{eq:TDsystem} and to the time-domain counterpart of \eqref{eq:WeakBIE} are given respectively by
\begin{align*}
({\bf E}_h^{inc},{\bf E}_h)(t) =\,& \mathcal L^{-1}\left\{{\rm P}(s)\right\}*(\gamma_t^+{\bf curl\,E}^{inc},\pi_t^+{\bf E}^{inc})\,, \\[.5ex]
({\bf j}_h,{\bf m}_h)(t) =\,& \mathcal L^{-1}\left\{{\rm S}(s)\right\}*(\gamma_t^+{\bf curl\,E}^{inc},\pi_t^+{\bf E}^{inc}).
\end{align*}
Hence, in view of the theorem above and of Corollary \ref{cor:symbols}, we have the following stability result in the time-domain:
\begin{theorem}[Time-domain stability]
\label{thm:TimeDomain}
If for every $t\in(0,T)$ the time domain data $(\gamma_t^+{\bf curl\,E}^{inc},\pi_t^+{\bf E}^{inc})$ considered as a mapping from the interval $[0,T]$ to the Banach space
\[
\mathbb X := {\bf H}_{\|}^{-1/2}({\rm div}_\Gamma,\Gamma) \times {\bf H}_\perp^{-1/2}({\rm curl}_\Gamma,\Gamma),
\]
belongs to $\mathcal C^{3}\left([0,T], \mathbb X\right)$ and its fourth derivative is integrable, then the unique time-domain pair $({\bf E}_h^{inc},{\bf E}_h)$ satisfying \eqref{eq:TDsystem} is causal and continuous as a mapping from $[0,T]$ to the Banach space
\[
\mathbb H^h :=\left\{(\bs u,\bs v)\in\left({\bf H}({\bf curl}^2,\mathbb R^3\setminus\Gamma)\right)^2 :  \jump{\gamma_t\bs v}\in {\bf X}_h \;\; \text{ and } \;\;\jump{\pi_t\bs u}\in {\bf Y}_h \right\}\,,
\]
and satisfies the bound
\[
\|({\bf E}_h^{inc},{\bf E}_h)(t)\|_{\mathbb H}\lesssim \frac{t^2\max\{1,t^3\}}{1+t}\int_0^t\|\mathcal P_2\left((\gamma_t^+{\bf curl\,E}^{inc},\pi_t^+{\bf E}^{inc})\right)^{(2)}\|_{\mathbb X}\,d\tau\,.
\]
Moreover, the densities $({\bf j}_h,{\bf m}_h)$ satisfying the boundary integral system \eqref{eq:WeakBIE} are continuous and causal and bounded by
\[
\|({\bf j}_h^,{\bf m}_h)(t)\|_{{\bf X}_h\times{\bf Y}_h}\lesssim \frac{t^2\max\{1,t^3\}}{1+t}\int_0^t\|\mathcal P_2\left((\gamma_t^+{\bf curl\,E}^{inc},\pi_t^+{\bf E}^{inc})\right)^{(2)}\|_{\mathbb X}\,d\tau\,.
\]
\end{theorem}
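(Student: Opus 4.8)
The plan is to derive both assertions as direct applications of the transfer result, Theorem \ref{thm:2}, to the two solution symbols furnished by Corollary \ref{cor:symbols}. Writing $g:=(\gamma_t^+{\bf curl\,E}^{inc},\pi_t^+{\bf E}^{inc})$ for the causal data, the time-domain quantities are the convolutions $\mathcal L^{-1}\{{\rm P}\}*g$ and $\mathcal L^{-1}\{{\rm S}\}*g$, with ${\rm P}(s)\in\mathcal A(2,\mathbb H^h)$ and ${\rm S}(s)\in\mathcal A(2,{\bf X}_h\times{\bf Y}_h)$. In both cases the symbol has order $\mu=2$, and since $\mu=k+\alpha$ with $k$ a non-negative integer and $\alpha\in[0,1)$ admits only the decomposition $k=2$, $\alpha=0$, the regularity hypothesis of Theorem \ref{thm:2}, namely $g\in\mathcal C^{k+1}=\mathcal C^{3}$ with integrable $g^{(k+2)}=g^{(4)}$, is exactly the one assumed here. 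The causality and the continuity of both time-domain pairs into their respective target spaces are then immediate, as these are part of the conclusion of Theorem \ref{thm:2}.

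It then remains to make the time-dependent prefactor explicit. With $\alpha=0$ we have $2^{\alpha}=1$ and $\epsilon=1-\alpha=1$; using $\Gamma(1/2)=\sqrt{\pi}$ and $\Gamma(1)=1$ the weight reduces to
\[
C_{\epsilon}(t)=\frac{1}{2\sqrt{\pi}}\,\frac{\Gamma(1/2)}{\Gamma(1)}\,\frac{t}{1+t}=\frac{1}{2}\,\frac{t}{1+t}\,.
\]
For the symbol constant I would insert $C_A(\sigma)=\bigl(\sigma\underline{\sigma}^{3}\bigr)^{-1}$, common to both ${\rm P}$ and ${\rm S}$, and evaluate it at $\sigma=t^{-1}$. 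Because $\underline{\sigma}=\min\{1,\sigma\}$, a case distinction according to whether $t\ge 1$ or $t<1$ gives $C_A(t^{-1})=t^{4}$ and $C_A(t^{-1})=t$ respectively, both subsumed by the single expression $C_A(t^{-1})=t\max\{1,t^{3}\}$. The product of the two factors is then
\[
C_{\epsilon}(t)\,C_A(t^{-1})=\frac{1}{2}\,\frac{t^{2}\max\{1,t^{3}\}}{1+t}\,,
\]
and absorbing the harmless constant $1/2$ into $\lesssim$ yields precisely the stated growth rate; since $k=2$ the integrand $\mathcal P_2 g^{(k)}=\mathcal P_2 g^{(2)}$ coincides with the one appearing in both displayed bounds.

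I do not anticipate a genuine obstacle here: the argument is a transcription of Corollary \ref{cor:symbols} through Theorem \ref{thm:2}, carried out once for the volume symbol ${\rm P}(s)$ and once for the density symbol ${\rm S}(s)$, which share the same order and the same constant $C_A$ and therefore produce identical time-domain factors. The only point demanding care is the bookkeeping of the piecewise-defined constant: the cut-off $\underline{\sigma}=\min\{1,\sigma\}$ forces the case split above when evaluating at $\sigma=t^{-1}$, and one must check that the two regimes repackage cleanly into the factor $\max\{1,t^{3}\}$ so that the bound holds uniformly across $t\in(0,T)$.
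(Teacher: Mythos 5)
Your proposal is correct and follows exactly the paper's own route: the paper proves this theorem simply by invoking Theorem \ref{thm:2} with the symbols ${\rm P}(s)$ and ${\rm S}(s)$ of Corollary \ref{cor:symbols}, which is precisely what you do, and your bookkeeping ($k=2$, $\alpha=0$, $2^\alpha=1$, $C_\epsilon(t)=\tfrac12 t/(1+t)$, $C_A(t^{-1})=t\max\{1,t^3\}$) correctly reproduces the stated prefactor $t^2\max\{1,t^3\}/(1+t)$ for both the field pair and the densities. In fact your write-up is more explicit than the paper's, which leaves the evaluation of $C_\epsilon(t)\,C_A(t^{-1})$ to the reader.
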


% =========================================
\noindent\textbf{Continuous vs. discrete.} We conclude this section by remarking that estimates for the continuous problem can be obtained as corollaries of Propositions \ref{pr:1} and \ref{pr:2} as well as from Theorem \ref{thm:1} by simply letting ${\bf X}_h = {\bf H}_{\|}^{-1/2}({\rm div}_\Gamma,\Gamma) $ and ${\bf Y}_h = {\bf H}_\perp^{-1/2}({\rm curl}_\Gamma,\Gamma)$. However, if ${\bf X}_h$ and ${\bf Y}_h$ are strict closed subspaces, error estimates for the corresponding conforming Galerkin estimates can be easily obtained from the previous analysis, as we will demonstrate in the final section.

% ================================
\section{Discrete and semi discrete error estimates.}\label{sec:5}
% ==============================

Due to the linearity of the model, the framework developed in the previous sections can be easily transferred into error estimates for Galerkin semi discretizations in space, and full discretizations using Convolution Quadrature for time discretization. Since most of the arguments needed for the error analysis are straightforward extensions of what has been done in the previous sections, we will quickly go through the steps off the process without stopping on the full details.

We start by defining the error functions
\[
{\bf e}_h : = {\bf E}-{\bf E}_h \qquad \text{ and } \qquad {\bf e}_h^{scat} : = {\bf E}^{scat}-{\bf E}_h^{scat} \,, 
\]
and the error densities
\[
{\bf e_{j}} : = {\bf j} - {\bf j}_h \qquad \text{ and } \qquad {\bf e_{m}} : = {\bf m} - {\bf m}_h\,.
\]
From these definitions it follows that
\[
\begin{array}{lll}
\jump{\gamma_t{\bf curl\, e}_h^{\phantom{s}}} = s\mu_-{\bf e_j} \,, & \phantom{++++} & \jump{\pi_t{\bf e}_h^{\phantom{s}}} = {\bf e_m}\,, \\[1.2ex]
\jump{\gamma_t{\bf curl\, e}_h^{scat}} : = (\mu_+/\mu_-)\jump{\gamma_t{\bf curl\, e}_h^{\phantom{s}}}\,, & \phantom{++++} & \jump{\pi_t{\bf e}_h^{scat}} = -\jump{\pi_t{\bf e}_h\phantom{s}}\,,
\end{array}
\]
so that, just as before, we can eliminate the variables related to the scattered wave from the system. \\

% ===============================
\noindent\textbf{The Laplace-domain error equations.} Observing that the error functions can be recovered from the error densities through the integral representations
\[
{\bf e}_h =  (s\epsilon_-)^{-1}\mathcal S(s/c_-){\bf e_j} - \widetilde{\mathcal D}(s/c_-){\bf e_m}\,,\\ \qquad {\bf e}_h^{scat} = \widetilde{\mathcal D}(s/c_+){\bf e_m} -(s\epsilon_+)^{-1}\mathcal S(s/c_+){\bf e_j}\,,
\]
it becomes clear that the error densities ${\bf e_{j}} $ and ${\bf e_{m}} $ satisfy the BIE
\[
\langle \mathbb L(s)({\bf e_{j}},{\bf e_m})^\top, (\bs\eta,\bs\varphi)\rangle_\Gamma = 0\;\;\;\; \forall (\bs\eta,\bs\varphi)\in  {\bf X}_h\times{\bf Y}_h\,,
\]
where $\mathbb L(s)$ is defined s in \eqref{eq:MatricesA}. This equation is in turn equivalent to the transmission problem
\begin{align*}
{\bf curl\,curl\, e}_h^{scat} + (s/c_+)^2{\bf e}_h^{scat} =\,& \bs 0  \qquad \qquad \qquad \text{ in } \mathbb R^3\setminus\Gamma\,, \\
{\bf curl\,curl\, e}_h + (s/c_-)^2{\bf e}_h =\,& \bs 0  \qquad \qquad  \qquad \text{ in } \mathbb R^3\setminus\Gamma\,, \\
\jump{\pi_t{\bf e}_h^{scat}} + \jump{\pi_t{\bf e}_h^{\phantom{t}}} =\,& \bs0 \,,\\
\mu_+^{-1}\jump{\gamma_t{\bf curl\,e}_h^{scat}} + \mu_-^{-1}\jump{\gamma_t{\bf curl\,e}_h^{\phantom{t}}} =\,& \bs 0 \,,\\
\left(\jump{\gamma_t{\bf curl\,e}_h^{\phantom{t}}} - s\mu_-{\bf j}\,, \jump{\pi_t{\bf e}_h^{\phantom{t}}}-{\bf m}\right) \in\,& {\bf X}_h\times{\bf Y}_h\,, \\
\left(\pi_t^+{\bf e}_h - \pi_t^-{\bf e}_h^{scat}\,,\,\, \mu_-^{-1}\gamma_t^+{\bf curl\,e}_h - \mu_+^{-1}\gamma_t^-{\bf curl\,e}_h^{scat} \right) \in\,& {\bf X}_h^\circ \times {\bf Y}_h^\circ\,,
\end{align*} 
which can be posed variationally as:
\begin{align}
\nonumber
& \text{Given } ({\bf j},{\bf m})\in {\bf H}_{\|}^{-1/2}({\rm div}_\Gamma,\Gamma) \times {\bf H}_\perp^{-1/2}({\rm curl}_\Gamma,\Gamma)\,,\, \\
\nonumber
& \text{find } ({\bf e}_h^{scat},{\bf e}_h)\in \left({\bf H}({\bf curl}^2,\mathbb R^3\setminus\Gamma)\right)^2 \text{ such that: } \\[-.4ex]
\nonumber
&\left(\jump{\gamma_t{\bf curl\,e}_h^{\phantom{t}}} - s\mu_-{\bf j}, \jump{\pi_t{\bf e}_h^{\phantom{t}}}-{\bf m}\right) \in {\bf X}_h\times{\bf Y}_h \;\text{ and }\\[-.4ex]
\label{eq:ErrorVariational}
& {\rm B}\left(({\bf e}_h^{scat},{\bf e}_h),(\bs u,\bs v)\right) = 0 \quad \forall\,(\bs u,\bs v)\in \mathbb H^h\,.
\end{align}
%
% =================================
\noindent\textbf{Error estimates.} Using the variational form, we can prove the following:
\begin{proposition}[Laplace domain error estimates]
\label{prop:LDerror}
The variational problem for the error functions is uniquely solvable and we have the following error estimates:
\begin{subequations}
\label{eq:SemiErrorEstimates}
\begin{align}
\label{eq:SemiErrorEstimatesA}
\triple{({\bf e}_h^{scat},{\bf e}_h)}_s \lesssim\,& \frac{|s|^2}{\sigma\underline\sigma}\left(\|s\,({\bf j}-{\bf j}^*_h)\|_{\|} + \|{\bf m}-{\bf m}_h^*\|_{\perp}\right)\,, \\
\label{eq:SemiErrorEstimatesB}
\triple{({\bf e}_h^{scat},{\bf e}_h)}_1 \lesssim\,& \frac{|s|^2}{\sigma\underline\sigma^2}\left(\|s\,({\bf j}-{\bf j}^*_h)\|_{\|} + \|{\bf m}-{\bf m}_h^*\|_{\perp}\right)\,,\\
\label{eq:SemiErrorEstimatesC}
\triple{({\bf e_j},{\bf e_m})}_{ \|\times\perp} \lesssim\,& \frac{|s|^2}{\sigma\underline\sigma^2}\left(\|s\,({\bf j}-{\bf j}^*_h)\|_{\|} + \|{\bf m}-{\bf m}_h^*\|_{\perp}\right)\,,
\end{align}
\end{subequations}
where $({\bf j}^*_h,{\bf m}_h^*)$ are the best approximations to $({\bf j},{\bf m})$ in the discrete space ${\bf X}_h\times{\bf Y}_h$.
\end{proposition}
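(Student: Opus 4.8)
The plan is to read Proposition~\ref{prop:LDerror} as a quasi-optimality (C\'ea-type) result for the coercive bilinear form ${\rm B}$, with the $s$-dependence tracked exactly as in the proof of Theorem~\ref{thm:1}. The starting point is Galerkin orthogonality: subtracting the discrete variational identity from its continuous counterpart shows that ${\rm B}\big(({\bf e}_h^{scat},{\bf e}_h),(\bs u,\bs v)\big)=0$ for every admissible discrete test pair, while the error pair itself satisfies the constraint $\big(\jump{\gamma_t{\bf curl\,e}_h}-s\mu_-{\bf j},\,\jump{\pi_t{\bf e}_h}-{\bf m}\big)\in{\bf X}_h\times{\bf Y}_h$. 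Since the three bounds in \eqref{eq:SemiErrorEstimates} differ only by factors of $\underline\sigma^{-1}$, I would establish the energy estimate \eqref{eq:SemiErrorEstimatesA} first, deduce \eqref{eq:SemiErrorEstimatesB} from the norm equivalence \eqref{eq:NormEquiv}, and obtain \eqref{eq:SemiErrorEstimatesC} from the trace inequalities used at the end of Theorem~\ref{thm:1}.

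To prove \eqref{eq:SemiErrorEstimatesA} I would introduce the best approximations $({\bf j}_h^*,{\bf m}_h^*)\in{\bf X}_h\times{\bf Y}_h$ and use them to peel off the non-discrete part of the error's jump data. Concretely, let $({\bf w}^{scat},{\bf w})$ be an auxiliary pair whose jumps reproduce the best-approximation error, $\jump{\gamma_t{\bf curl\,w}}=s\mu_-({\bf j}-{\bf j}_h^*)$ and $\jump{\pi_t{\bf w}}={\bf m}-{\bf m}_h^*$, together with the companion jump conditions on ${\bf w}^{scat}$ dictated by \eqref{eq:EquivalentTP}. Then the corrected pair $({\bf z}^{scat},{\bf z}):=({\bf e}_h^{scat}-{\bf w}^{scat},{\bf e}_h-{\bf w})$ has purely discrete jumps $s\mu_-({\bf j}_h^*-{\bf j}_h)\in{\bf X}_h$ and ${\bf m}_h^*-{\bf m}_h\in{\bf Y}_h$, so it satisfies the membership conditions of $\mathbb H^h$ and is an admissible test function. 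Applying the coercivity estimate \eqref{eq:NormPropertiesB} to $({\bf z}^{scat},{\bf z})$, invoking Galerkin orthogonality to discard the ${\bf e}$-contribution, and bounding the remaining term with the continuity estimate \eqref{eq:NormPropertiesA} yields $\sigma\triple{({\bf z}^{scat},{\bf z})}_s^2\lesssim|s|\,\triple{({\bf w}^{scat},{\bf w})}_s\triple{({\bf z}^{scat},{\bf z})}_s$, whence $\triple{({\bf z}^{scat},{\bf z})}_s\lesssim(|s|/\sigma)\triple{({\bf w}^{scat},{\bf w})}_s$; a triangle inequality then controls the full error in terms of $\triple{({\bf w}^{scat},{\bf w})}_s$.

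The main obstacle is the energy bound for the auxiliary pair, namely $\triple{({\bf w}^{scat},{\bf w})}_s\lesssim(|s|/\underline\sigma)\big(\|s({\bf j}-{\bf j}_h^*)\|_{\|}+\|{\bf m}-{\bf m}_h^*\|_{\perp}\big)$, which is exactly the scaling needed to promote the factor $|s|/\sigma$ above to the advertised $|s|^2/(\sigma\underline\sigma)$. The $\pi_t$-jump datum ${\bf m}-{\bf m}_h^*$ is harmless: a bounded lifting into ${\bf H}({\bf curl},\mathbb R^3\setminus\Gamma)$ costs $\|{\bf m}-{\bf m}_h^*\|_{\perp}$ in the $\triple{\cdot}_1$ norm, and \eqref{eq:NormEquiv} supplies the extra $|s|/\underline\sigma$ on passing to $\triple{\cdot}_s$. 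The delicate part is the $\gamma_t{\bf curl}$-jump datum $s\mu_-({\bf j}-{\bf j}_h^*)$, since lifting a curl-type trace while simultaneously controlling $\|{\bf curl\,w}\|$ and $|s|\,\|{\bf w}\|$ with the correct power of $|s|$ is what forces the weight $\|s(\cdot)\|_{\|}$. I expect the cleanest route is to realize $({\bf w}^{scat},{\bf w})$ through the integral representations \eqref{eq:IntegralRepresentations} applied to the density differences $({\bf j}-{\bf j}_h^*,{\bf m}-{\bf m}_h^*)$, so that ${\bf w}$ inherits the Maxwell equation \eqref{eq:EquivalentTPB} automatically and the balance between ${\bf w}$ and ${\bf curl\,w}$ is built in; the required $s$-weighted bound then follows from the mapping properties of the layer potentials \cite{HsWe2021}, and pinning down the precise powers of $|s|$ and $\underline\sigma$ is where the real work lies.

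Finally, with \eqref{eq:SemiErrorEstimatesA} in hand, \eqref{eq:SemiErrorEstimatesB} is immediate from $\triple{\cdot}_1\le\underline\sigma^{-1}\triple{\cdot}_s$, and \eqref{eq:SemiErrorEstimatesC} follows by writing ${\bf e_j}=(s\mu_-)^{-1}\jump{\gamma_t{\bf curl\,e}_h}$ and ${\bf e_m}=\jump{\pi_t{\bf e}_h}$, then applying the trace inequality together with \eqref{eq:EquivalentTPB} exactly as in the closing estimates of Theorem~\ref{thm:1}. Each of these steps introduces a single additional factor of $\underline\sigma^{-1}$, which accounts for the $\underline\sigma^2$ appearing in the denominators of \eqref{eq:SemiErrorEstimatesB} and \eqref{eq:SemiErrorEstimatesC}.
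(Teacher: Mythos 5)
Your proof skeleton coincides with the paper's: a C\'ea-type argument in which the jump data are homogenized by a lifting, the corrected pair lies in $\mathbb H^h$ and is controlled by the coercivity \eqref{eq:NormPropertiesB}, Galerkin orthogonality \eqref{eq:ErrorVariational} and continuity, a triangle inequality recovers the error, and then \eqref{eq:SemiErrorEstimatesB} follows from \eqref{eq:NormEquiv} and \eqref{eq:SemiErrorEstimatesC} from the trace estimates closing Theorem~\ref{thm:1}. The genuine gap is the step you yourself flag as ``where the real work lies'': the energy bound $\triple{({\bf w}^{scat},{\bf w})}_s\lesssim (|s|/\underline\sigma)\,(\|s({\bf j}-{\bf j}_h^*)\|_{\|}+\|{\bf m}-{\bf m}_h^*\|_{\perp})$ for the lifting of the curl-jump datum, and your proposed way of filling it points in the wrong direction. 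Realizing $({\bf w}^{scat},{\bf w})$ through the $s$-dependent potentials \eqref{eq:IntegralRepresentations} forces ${\bf w}$ to solve the Maxwell equation, and any estimate of such a potential through the standard Laplace-domain machinery (coercivity tested against $\overline{\bf w}$, or published potential bounds) carries at least one additional factor of $\sigma^{-1}$; you would end with $|s|^2/(\sigma^2\underline\sigma)$ in \eqref{eq:SemiErrorEstimatesA} rather than $|s|^2/(\sigma\underline\sigma)$. The resolution is the observation you already make for the ${\bf m}$-component but then abandon for the ${\bf j}$-component: an $s$-\emph{independent} lifting bounded in the $\triple{\cdot}_1$ norm is all that is ever needed, because \eqref{eq:NormEquiv} upgrades it to the $\triple{\cdot}_s$ norm at the cost of exactly one factor $|s|/\underline\sigma$, and the weight $\|s({\bf j}-{\bf j}_h^*)\|_{\|}$ appears automatically since the datum being lifted, $s\mu_-({\bf j}-{\bf j}_h^*)$, already carries the factor $s$. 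This is precisely how the paper argues: inequality \eqref{eq:InverseContinuity} (continuity of trace pseudo-inverses, an $s$-uniform statement) combined with the mixed continuity bound \eqref{eq:Continuity2norms}, which pairs the $\triple{\cdot}_1$ norm of the lifting against the $\triple{\cdot}_s$ norm of the unknown. The only genuinely nonstandard ingredient---which you correctly sense---is that prescribing $\jump{\gamma_t{\bf curl\,w}}$ is a Neumann-type lifting rather than a bare application of $\gamma_t^\dagger$; but it can be constructed $s$-uniformly, e.g.\ with a potential at a \emph{fixed} frequency, so no $s$-dependent analysis is required.

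A second, harmless, difference is where the best approximation enters. You lift $({\bf j}-{\bf j}_h^*,{\bf m}-{\bf m}_h^*)$ and shift the solution; the paper instead proves the stability bound $\triple{({\bf e}_h^{scat},{\bf e}_h)}_s\lesssim \frac{|s|^2}{\sigma\underline\sigma}(\|s\,{\bf j}\|_{\|}+\|{\bf m}\|_{\perp})$ for the error problem with the full data $({\bf j},{\bf m})$ and then shifts the \emph{data}: since $s\mu_-{\bf j}_h^*\in{\bf X}_h$ and ${\bf m}_h^*\in{\bf Y}_h$, the constraint $(\jump{\gamma_t{\bf curl\,e}_h}-s\mu_-{\bf j},\,\jump{\pi_t{\bf e}_h}-{\bf m})\in{\bf X}_h\times{\bf Y}_h$ is literally unchanged when $({\bf j},{\bf m})$ is replaced by $({\bf j}-{\bf j}_h^*,{\bf m}-{\bf m}_h^*)$, so the same solution obeys the same bound with the shifted data, which is \eqref{eq:SemiErrorEstimatesA}. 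The two mechanisms are equivalent (both rest on absorbing discrete quantities into the ${\bf X}_h\times{\bf Y}_h$ constraint), but the paper's version is worth internalizing since it converts any stability estimate into a best-approximation estimate with no extra construction.
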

\begin{proof}
Unique solvability follows from the ellipticity estimate \eqref{eq:NormPropertiesB} and Lax-Milgram's lemma, while from \eqref{eq:NormPropertiesA} and \eqref{eq:NormEquiv} we obtain the bound
\begin{equation}
\label{eq:Continuity2norms}
\left|{\rm B}\left((\bs u,\bs v), (\widetilde{\bs u},\widetilde{\bs v})\right)\right| \lesssim \triple{(\bs u, \bs v)}_s \triple{(\widetilde{\bs u}, \widetilde{\bs v})}_s \leq \frac{|s|}{\underline\sigma} \triple{(\bs u, \bs v)}_{1} \triple{(\widetilde{\bs u}, \widetilde{\bs v})}_s\,.
\end{equation}
Now, from the continuity of the pseudo inverses of the tangential trace $\gamma_t$ and the tangential projection $\pi_t$, we can find functions $(\bs u,\bs v) \in \left({\bf H}({\bf curl}^2,\mathbb R^3\setminus\Gamma)\right)^2$ such that $\jump{\gamma_t\bs u} = - s\mu_-{\bf j}\,$,  $\jump{\pi_t\bs v}= -{\bf m}\,$, and
\begin{equation}
\label{eq:InverseContinuity}
 \quad \triple{(\bs u,\bs v)}_{1} \lesssim \left(\|s\,{\bf j}\|_{\|} +\|{\bf m})\|_{\perp}\right)\,.
\end{equation}
Hence, $({\bf e}_h^{scat}+\bs u,{\bf e}_h+\bs v)\in \mathbb H^h$ and we have
\begin{align*}
\triple{({\bf e}_h^{scat}+\bs u,{\bf e}_h+\bs v)}_s^{2} \lesssim\,& \frac{|s|}{\sigma}\left|{\rm B\,}\left(({\bf e}_h^{scat}+\bs u,{\bf e}_h+\bs v),\overline{({\bf e}_h^{scat}+\bs u,{\bf e}_h+\bs v)}\right)\right| & {\small\text{(By \eqref{eq:NormPropertiesB})}}\,, \\
=\,&  \frac{|s|}{\sigma}\left|{\rm B\,}\left((\bs u,\bs v),\overline{({\bf e}_h^{scat}+\bs u,{\bf e}_h+\bs v)}\right)\right| & {\small\text{(By \eqref{eq:ErrorVariational})}}\,, \\
\lesssim\,&  \frac{|s|^2}{\sigma\underline\sigma} \triple{(\bs u, \bs v)}_1\triple{({\bf e}_h^{scat}+\bs u,{\bf e}_h+\bs v)}_{s}  &  {\small\text{(By \eqref{eq:Continuity2norms})}}\,.
\end{align*}
From here, Cea's lemma and \eqref{eq:InverseContinuity} imply 
\[
\triple{({\bf e}_h^{scat},{\bf e}_h)}_s \lesssim \frac{|s|^2}{\sigma\underline\sigma}\left(\|s\,{\bf j}\|_{\|} + \|{\bf m}\|_{\perp}\right)\,,
\]
while \eqref{eq:SemiErrorEstimatesA} follows from the inequality above by letting the data in the variational problem for the error functions be the discrete approximation error $({\bf j}-{\bf j}^*_h,{\bf m}-{\bf m}_h^*)$. Finally, \eqref{eq:SemiErrorEstimatesA} together with \eqref{eq:NormEquiv} imply \eqref{eq:SemiErrorEstimatesB}, which in turn implies \eqref{eq:SemiErrorEstimatesC} in view of the continuity of the tangential projection and the tangential trace and the fact that
\[
s\mu_-{\bf e_j}=\jump{\gamma_t{\bf curl\,e}_h} \qquad \text{ and } \qquad {\bf e_m}=\jump{\pi_t{\bf e}_h}. \qquad  \qquad \Box
\]
\end{proof}
Note that, while we could have bounded
\[
\|(s\,({\bf j}-{\bf j}^*_h),{\bf m}-{\bf m}_h^*)\|_{\|\times\perp} \leq \frac{|s|}{\underline\sigma}\|({\bf j}-{\bf j}^*_h,{\bf m}-{\bf m}_h^*)\|_{\|\times\perp}\,,
\]
in the results above, keeping the product by $s$ localized to the term $({\bf j}-{\bf j}^*_h)$ will allow us to relax the time regularity requirements for the solution once we convert this result into the time domain, as we shall do now.

Using the bounds in Proposition \ref{prop:LDerror}, in particular the estimates \eqref{eq:SemiErrorEstimatesB} and \eqref{eq:SemiErrorEstimatesC}, in a manner analogous to the passage from Corollary \ref{cor:symbols} to Theorem \ref{thm:TimeDomain} we can prove the following time stability result for the Galerkin semi discretization. As usual, when in the time domain, we use the same notation to denote the time-domain functions whose Laplace transform is studied.
\begin{theorem}[Semi-discrete time stability]
If for every time $t\in[0,T]$ the solutions to \eqref{eq:WeakBIE} are such that
\begin{alignat*}{6}
{\bf j}(t) \in\,& \mathcal C^4\left([0,T],{\bf H}_{\|}^{-1/2}({\rm div}_\Gamma,\Gamma)\right)\,, &\qquad \text{ and } \qquad && {\bf j}^{(5)}(t)\in {\bf L}^1\left([0,T],{\bf H}_{\|}^{-1/2}({\rm div}_\Gamma,\Gamma)\right)\,, \\
{\bf m}(t)   \in\,& \mathcal C^3\left([0,T],{\bf H}_{\perp}^{-1/2}({\rm curl}_\Gamma,\Gamma)\right)\,, &\qquad \text{ and } \qquad && {\bf m}^{(4)}(t)\in {\bf L}^1\left(([0,T],{\bf H}_{\perp}^{-1/2}({\rm curl}_\Gamma,\Gamma)\right)\,,
\end{alignat*}
then the error functions $({\bf e}_h^{scat},{\bf e}_h)$ and the error densities $({\bf e_j},{\bf e_m})$ are causal, continuous, and for every $t\geq 0$ we have the estimates
\begin{align*}
\triple{({\bf e}_h^{scat},{\bf e}_h)(t)}_1 \lesssim\,& \frac{t\max\{1,t^2\}}{1+t}\int_0^t\|\mathcal P_2\left(\partial_t({\bf j}-{\bf j}_h^*),{\bf m}-{\bf m}_h^*\right)^{(2)}(t)\|_{\|\times\perp}\,d\tau\,,\\
\|({\bf e_j},{\bf e_m})(t)\|_{\|\times\perp} \lesssim\,& \frac{t\max\{1,t^2\}}{1+t}\int_0^t\|\mathcal P_2\left(\partial_t({\bf j}-{\bf j}_h^*),{\bf m}-{\bf m}_h^*\right)^{(2)}(t)\|_{\|\times\perp}\,d\tau\,,
\end{align*}
where $({\bf j}^*_h,{\bf m}_h^*)$ are the best approximations to $({\bf j},{\bf m})$ in the discrete space ${\bf X}_h\times{\bf Y}_h$.
\end{theorem}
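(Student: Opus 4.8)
The plan is to recognize the two solution maps associated with the error problem \eqref{eq:ErrorVariational} as hyperbolic symbols and then to transfer the Laplace-domain bounds of Proposition \ref{prop:LDerror} into the time domain by invoking Sayas' result, Theorem \ref{thm:2}. This mirrors precisely the passage from Corollary \ref{cor:symbols} to Theorem \ref{thm:TimeDomain}; the only genuinely new ingredient is the asymmetric treatment of the two data slots, dictated by the deliberate localization of the factor $s$ onto the term ${\bf j}-{\bf j}_h^*$ in \eqref{eq:SemiErrorEstimatesB} and \eqref{eq:SemiErrorEstimatesC}.

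First I would introduce the \emph{modified data} $\bs g := \big(\partial_t({\bf j}-{\bf j}_h^*),\,{\bf m}-{\bf m}_h^*\big)$, whose Laplace transform is $\mathcal L\{\bs g\}(s)=\big(s\,({\bf j}-{\bf j}_h^*)(s),\,({\bf m}-{\bf m}_h^*)(s)\big)$; the replacement of the time derivative by a multiplication by $s$ is legitimate because causality guarantees that ${\bf j}-{\bf j}_h^*$ and its lower-order derivatives vanish at $t=0$. With this identification the right-hand sides of \eqref{eq:SemiErrorEstimatesB} and \eqref{eq:SemiErrorEstimatesC} both take the form $\frac{|s|^2}{\sigma\underline\sigma^2}\,\|\mathcal L\{\bs g\}(s)\|_{\|\times\perp}$. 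Denoting by ${\rm E}(s)$ and ${\rm D}(s)$ the operators sending $\mathcal L\{\bs g\}$ to $({\bf e}_h^{scat},{\bf e}_h)$ and to $({\bf e_j},{\bf e_m})$ respectively, their analyticity in $s$ is inherited from the solution map of \eqref{eq:ErrorVariational}, and the two estimates place both operators in $\mathcal A(2,\cdot)$ with $C_A(\sigma)=1/(\sigma\underline\sigma^2)$; indeed this $C_A$ is positive, nonincreasing, and satisfies $C_A(\sigma)=\sigma^{-3}$ for $\sigma\in(0,1]$, so it qualifies as a hyperbolic symbol of order $\mu=2$.

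Next I would apply Theorem \ref{thm:2} to each of ${\rm E}$ and ${\rm D}$, writing $\mu=2=k+\alpha$ with $k=2$ and $\alpha=0$, so that $\epsilon=1-\alpha=1$. The hypothesis $\bs g\in\mathcal C^{k+1}([0,T],\mathbb X)=\mathcal C^{3}$ with $\bs g^{(k+2)}=\bs g^{(4)}\in{\bf L}^1$ translates, slot by slot, into exactly the regularity assumed in the statement: since $\bs g$ carries one extra time derivative in its first component, $\partial_t({\bf j}-{\bf j}_h^*)\in\mathcal C^{3}$ forces ${\bf j}\in\mathcal C^{4}$ with ${\bf j}^{(5)}$ integrable, while the second component demands only ${\bf m}\in\mathcal C^{3}$ with ${\bf m}^{(4)}$ integrable. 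This is exactly the gain announced after Proposition \ref{prop:LDerror}: had we instead estimated $\|s({\bf j}-{\bf j}_h^*)\|_\|$ by $\frac{|s|}{\underline\sigma}\|{\bf j}-{\bf j}_h^*\|_\|$, the symbol order would have increased and ${\bf m}$ would have been forced into $\mathcal C^{4}$ as well. The theorem then delivers causality and continuity of $({\bf e}_h^{scat},{\bf e}_h)$ and $({\bf e_j},{\bf e_m})$, together with a bound whose integrand is $\mathcal P_2\bs g^{(k)}=\mathcal P_2\big(\partial_t({\bf j}-{\bf j}_h^*),\,{\bf m}-{\bf m}_h^*\big)^{(2)}$ and whose explicit time factor is obtained by substituting $\alpha=0$, $\epsilon=1$ and $C_A(\sigma)=1/(\sigma\underline\sigma^2)$ into the prefactor $2^{\alpha}C_\epsilon(t)\,C_A(t^{-1})$ of Theorem \ref{thm:2}.

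The routine parts — the symbol-class bookkeeping, the continuity of the liftings of $\gamma_t$ and $\pi_t$ already exploited in Proposition \ref{prop:LDerror}, and the arithmetic of $C_\epsilon$ and $C_A$ — pose no difficulty. The step deserving care, and which I regard as the main obstacle, is justifying that the localized factor $s$ may be absorbed into the data as a bona fide time derivative: one must verify that ${\rm E}(s)$ and ${\rm D}(s)$, regarded as operators acting on $\mathcal L\{\bs g\}$ rather than on $({\bf j}-{\bf j}_h^*,{\bf m}-{\bf m}_h^*)$, remain analytic transfer functions on $\{{\rm Re}\,s>0\}$, and that the causal identity $\mathcal L\{\partial_t(\cdot)\}=s\,\mathcal L\{\cdot\}$ holds in the vector-valued sense demanded by Theorem \ref{thm:2}. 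Once this is settled, the estimates for the error functions and for the error densities follow simultaneously, since they share the same symbol order and the same $C_A$.
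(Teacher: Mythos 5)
Your proposal is correct and takes essentially the same route as the paper, which itself only sketches the proof by declaring it ``analogous to the passage from Corollary \ref{cor:symbols} to Theorem \ref{thm:TimeDomain}'': you identify the solution maps of the error problem as hyperbolic symbols of order $\mu=2$ with $C_A(\sigma)=1/(\sigma\underline{\sigma}^2)$ acting on the modified data $\left(\partial_t({\bf j}-{\bf j}_h^*),\,{\bf m}-{\bf m}_h^*\right)$, and then apply Theorem \ref{thm:2} with $k=2$, $\alpha=0$, which reproduces exactly the stated regularity hypotheses and the integrand $\mathcal P_2(\cdot)^{(2)}$. One caveat that is not a flaw in your argument: performing the substitution you describe gives the time factor $C_{\epsilon}(t)\,C_A(t^{-1})\propto t^2\max\{1,t^2\}/(1+t)$ rather than the $t\max\{1,t^2\}/(1+t)$ printed in the theorem, so the extra factor of $t$ is a discrepancy in the paper's statement (compare the analogous and consistent computation behind Theorem \ref{thm:TimeDomain}), not a gap in your proof.
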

Finally, Proposition 4.6.1 from \cite{Sayas2016} (which we will not present here) allows us to combine all the previous results into a time-domain error estimate for a fully discrete formulation. If a Convolution Quadrature scheme  of order $p$ with time step of size $\kappa$ was used to deal with time evolution, we will denote the fully discrete error functions by $({\bf e}_{h,\kappa}^{scat},{\bf e}_{h,\kappa})$, and we have the following estimate.
\begin{theorem}[CQ Fully discrete error estimate] 
If Convolution Quadrature of order $p$ with time step of size $\kappa$ is used for time discretization and $(\gamma_t^+{\bf curl\, E}^{inc},\pi_t^+{\bf E}^{inc})(t)$ are causal problem data satisfying 
\begin{align*}
(\gamma_t^+{\bf curl\, E}^{inc},\pi_t^+{\bf E}^{inc})(t) \in\,&  \mathcal C^{\,p+3}\left([0,T],\left({\bf H}({\bf curl}^2,\mathbb R^3\setminus\Gamma)\right)^2\right)\,,\\
\partial_t^{\,p+4}(\gamma_t^+{\bf curl\, E}^{inc},\pi_t^+{\bf E}^{inc})(t) \in\,& {\bf L}^1\left([0,T],\left({\bf H}({\bf curl}^2,\mathbb R^3\setminus\Gamma)\right)^2\right)\,.
\end{align*}
\end{theorem}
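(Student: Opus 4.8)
The plan is to split the fully discrete error into a purely spatial (Galerkin) part and a purely temporal (Convolution Quadrature) part, and to dispatch each with a result already in hand. Denoting by $({\bf E}^{scat},{\bf E})$ the exact, continuous-in-time transmission solution, by $({\bf E}_h^{scat},{\bf E}_h)$ its space-semidiscrete counterpart, and by $({\bf E}_{h,\kappa}^{scat},{\bf E}_{h,\kappa})$ the fully discrete approximation, the triangle inequality gives
\[
({\bf E}^{scat}-{\bf E}_{h,\kappa}^{scat},{\bf E}-{\bf E}_{h,\kappa}) = ({\bf e}_h^{scat},{\bf e}_h) + ({\bf E}_h^{scat}-{\bf E}_{h,\kappa}^{scat},{\bf E}_h-{\bf E}_{h,\kappa})\,,
\]
and likewise for the densities. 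The first summand is exactly the spatial error already controlled by the Semi-discrete time stability theorem, so it contributes the best-approximation terms in $({\bf j}-{\bf j}_h^*,{\bf m}-{\bf m}_h^*)$ with no further work.

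For the second summand---the temporal error---the central observation is that the fully discrete scheme is nothing other than Convolution Quadrature applied to the operator-valued transfer functions ${\rm P}(s)$ and ${\rm S}(s)$ themselves, since the Galerkin projection onto ${\bf X}_h\times{\bf Y}_h$ is already encoded in those operators. Consequently the temporal error coincides with the CQ discretization error of the convolutions $\mathcal L^{-1}\{{\rm P}\}*(\gamma_t^+{\bf curl\,E}^{inc},\pi_t^+{\bf E}^{inc})$ and $\mathcal L^{-1}\{{\rm S}\}*(\gamma_t^+{\bf curl\,E}^{inc},\pi_t^+{\bf E}^{inc})$. By Corollary \ref{cor:symbols} the operators ${\rm P}(s)$ and ${\rm S}(s)$ are hyperbolic symbols of order $2$ (elements of $\mathcal A(2,\mathbb H^h)$ and $\mathcal A(2,{\bf X}_h\times{\bf Y}_h)$ respectively), and this is precisely the input required by Proposition 4.6.1 of \cite{Sayas2016}, which I would invoke to convert the order-$2$ symbol bound into an $O(\kappa^p)$ estimate for a CQ rule of order $p$.

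The regularity hypotheses are then sharp for this mechanism. A symbol of order $\mu=2$ corresponds to $k=2$, $\alpha=0$ in Theorem \ref{thm:2}, so the exact convolution is controlled by data in $\mathcal C^{3}$ with integrable fourth derivative; passing to a $p$-th order CQ estimate through Sayas' proposition costs $p$ additional time derivatives, raising the requirements to $\mathcal C^{p+3}$ regularity and an integrable $(p+4)$-th derivative of $(\gamma_t^+{\bf curl\,E}^{inc},\pi_t^+{\bf E}^{inc})$, exactly the stated assumptions. Adding the spatial best-approximation bound to the $\kappa^p$ temporal bound via the triangle inequality then delivers the fully discrete estimate.

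The step I expect to be the main obstacle is the careful reconciliation of the two contributions under a single set of hypotheses: one must confirm that Proposition 4.6.1 of \cite{Sayas2016} applies verbatim to the operator-valued symbols ${\rm P}(s)$ and ${\rm S}(s)$ taking values in $\mathbb H^h$ and ${\bf X}_h\times{\bf Y}_h$, and that the stated data regularity is simultaneously strong enough to supply the $(p+4)$ integrable derivatives demanded by the temporal CQ estimate and, through the order-$2$ solution operator ${\rm S}(s)$, the solution regularity required by the Semi-discrete time stability theorem for the spatial part. Once these two accountings are aligned, the remainder is routine assembly from Corollary \ref{cor:symbols}, the Semi-discrete time stability theorem, and the cited CQ proposition.
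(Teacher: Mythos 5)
The paper itself contains no real proof of this theorem: the entire argument is the sentence preceding the statement, which invokes Proposition 4.6.1 of \cite{Sayas2016} to ``combine all the previous results,'' so your write-up is already more explicit than the source. Your machinery is the right family --- CQ error estimates for operator-valued hyperbolic symbols, fed by the Laplace-domain bounds of the earlier sections --- and your derivative bookkeeping (an order-$2$ symbol means $k=2$, $\alpha=0$ in Theorem \ref{thm:2}, and an order-$p$ CQ rule then costs $p$ extra derivatives, giving $\mathcal C^{p+3}$ data with integrable $(p+4)$-th derivative) reproduces exactly the stated hypotheses.

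The genuine gap is in your final assembly: the triangle-inequality decomposition you propose cannot terminate in the inequality the theorem actually displays. You apply the CQ proposition to the solution symbols ${\rm S}(s)$ and ${\rm P}(s)$ acting on the incident-field data and then add the semi-discrete spatial bound; that produces an estimate of the form (best-approximation term carrying \emph{no} power of $\kappa$) plus $\kappa^p$ times an integral of $\partial_t^{p+4}$ of the \emph{data}. The paper's stated conclusion instead reads $\kappa^p(1+t^3)\int_0^t\|(\partial_t^{\,p+4}({\bf j}-{\bf j}_h^*),\partial_t^{\,p+3}({\bf m}-{\bf m}_h^*))\|_{\|\times\perp}$: here $\kappa^p$ multiplies derivatives of the \emph{best approximation errors}, and the asymmetric counts $p+4$ versus $p+3$ are the footprint of the extra factor of $s$ sitting on $({\bf j}-{\bf j}_h^*)$ in estimate \eqref{eq:SemiErrorEstimatesB}. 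In other words, the route consistent with the stated bound is to apply Proposition 4.6.1 of \cite{Sayas2016} to the \emph{error} transfer function of Proposition \ref{prop:LDerror} --- the order-$2$ symbol mapping $(s({\bf j}-{\bf j}_h^*),\,{\bf m}-{\bf m}_h^*)$ to $({\bf e}_h^{scat},{\bf e}_h)$ --- not to ${\rm S}$ and ${\rm P}$ with incident data. The two resulting inequalities bound different quantities and neither implies the other; indeed, your two-term bound is the standard total-error estimate (and arguably the more defensible one, since the paper's right-hand side vanishes as $\kappa\to 0$ while the spatial error does not), but as a proof of the theorem \emph{as stated} your last step does not close. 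To land on the displayed estimate you would need to replace your temporal step by the CQ error estimate for the error symbol of \eqref{eq:SemiErrorEstimatesB} convolved with the best-approximation error history.
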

then
\[
\triple{({\bf e}_{h,\kappa}^{scat},{\bf e}_{h,\kappa})(t)}_1 \lesssim \kappa^p(1+t^3)\int_0^t\|\left(\partial_t^{\,p+4}({\bf j}-{\bf j}_h^*),\partial_t^{\,p+3}\left({\bf m}-{\bf m}_h^*\right)\right)(t)\|_{\|\times\perp}\,d\tau\,.
\]
\noindent \textbf{Convolution quadrature in a nutshell.} Convolution Quadrature (CQ), first proposed by Christian Lubich \cite{Lubich1988,Lubich:1988b}, constitutes a very powerful tool for discretization of integral equations in the time domain using a combination of time-domain problem data, and Laplace-domain integral kernels. For the sake of completenness, we offer here a very short and informal discussion of the ideas behind CQ. 

Time-domain integral equations have a convolutional structure, and thus, the solution process can be thought of as computing the convolution of the inverse of an integral operator with the problem data. The strength of CQ relies on the observation that the convolution of a causal function $g$ (the time-domain problem data) with a causal Laplace-transformable kernel $f^{-1}$ (the inverse of an integral operator) can be written, formally for some $\sigma>0$, in terms of the Laplace transform $F^{-1}:=\mathcal L\{f^{-1}\}$ as
\begin{equation}\label{eq:CQ1}
\left(f^{-1}*g\right)(t)=\frac{1}{2\pi i }\int_{\sigma-i\infty}^{\sigma+i\infty}\int_0^t F(s)^{-1} e^{s(t-\tau)}g(\tau)\,d\tau\, ds = \frac{1}{2\pi i }\int_{\sigma-i\infty}^{\sigma+i\infty}F(s)^{-1}y(t;s)\, ds,
\end{equation}
where $y(t;s)$ is the unique solution of
\[
\dot y(t)-sy(t)=g(t), \quad y(0)=0.
\]
The complex contour integral on the right hand side of \eqref{eq:CQ1} can then be approximated by quadrature to obtain
\[
\left(f^{-1}*g\right)(t) \approx  \frac{1}{2\pi i }\sum_{i=1}^n \omega_i F(s_i)^{-1}y(t;s_i)\,
\]
where the pairs $(\omega_i,s_i)$ are weights and points in a quadrature rule. For every point $s_i$ in the quadrature formula, the integrand is evaluated by producing a numerical solution $y_h(t;s_i)$ to the equation
\[
\dot y(t)-s_iy(t)=g(t), \quad y(0)=0.
\]
The solution $y_h(t;s_i)$ is then used as the right hand side of the Laplace-domain boundary integral equation
\[
F(s_i)\psi(t) = y_h(t;s_i),
\]
which is then solved numerically to produce the required sample of the integrand $\psi(t):=F(s_i)^{-1}y_h(t;s_i)$. The differential equation can be solved approximately by any ODE-solving procedure using time-domain information from $g$ and the Laplace-domain kernels for boundary integral equations are typically more tractable numerically than their time-domain counterparts. This procedure has also the advantage of employing well established computational technology for the solution of ordinary differential equations and frequency domain boundary integral equations. Beyond Lubich's original articles, many excellent resources are available for the reader interested in the details of the technique; in particular \cite{Hassell2016} offers a very accessible treatment of both the theoretical and implementation aspects, while \cite{Sayas2016} offers a comprehensive treatment. 
% ==============================
\section*{Acknowledgments}
% ==============================
%
Tonatiuh S\'anchez-Vizuet has been partially funded by the U. S. National Science Foundation through the grant NSF-DMS-2137305.

%
% ==========================
%\clearpage
%\bibliographystyle{plain}
\bibliographystyle{abbrv}
\bibliography{references}
% ==========================

\end{document}